\newtheorem{theorem}{Theorem}
\newcommand{\CC}{\mathbb{C}}
\newcommand{\RR}{\mathbb{R}}
\newcommand{\ZZ}{\mathbb{Z}}
\newcommand{\Hc}{\mathcal{H}}
\newcommand{\Ac}{\mathcal{A}}
\newcommand{\Lc}{\mathcal{L}}
\newcommand{\rank}{\operatorname{rank}}
\DeclareMathOperator*{\esup}{ess\,sup}
\DeclareMathOperator*{\einf}{ess\,inf}
\newcommand{\espan}{\operatorname{span}}
\title{{\bf Regular generalized sampling in $T$-invariant subspaces of a Hilbert space}}
\author{
{\bf Antonio~G. Garc\'{\i}a}\thanks{E-mail:\texttt{agarcia@math.uc3m.es}}, \, 
{\bf Mar\'{\i}a~J. Mu\~noz-Bouzo}\thanks{E-mail:\texttt{mjmunoz@mat.uned.es}}\,
{\bf  and \,G. P\'erez-Villal\'on}\thanks{E-mail:\texttt{gperez@euitt.upm.es}}
}
\date{}
\begin{document}
\maketitle
\begin{itemize}
\item[*] Departamento de Matem\'aticas, Universidad Carlos III de Madrid,
 Avda. de la Universidad 30, 28911 Legan\'es-Madrid, Spain.
\item[\dag] Departamento de Matem\'aticas Fundamentales, U.N.E.D., Senda del Rey 9, 28040 Madrid, Spain.
\item[\ddag] Departamento de Matem\'atica Aplicada a las Tecnolog\'{\i}as de la Informaci\'on y las Comunicaciones, E.T.S.I.T., U.P.M.,
 Avda. Complutense 30, 28040 Madrid, Spain.
\end{itemize}
\begin{abstract}
A regular generalized sampling theory in some structured $T$-invariant subspaces of a Hilbert space $\Hc$, where $T$ denotes a bounded invertible operator in $\Hc$, is established in this paper. This is done by walking through the most important cases which generalize the usual sampling settings.
\end{abstract}
{\bf Keywords}: $T$-invariant subspaces; Dual frames; Riesz sequences; Sampling expansions.

\noindent{\bf AMS}: 42C15; 94A20.
\section{Statement of the problem}
\label{section1}
The aim of this paper is to establish a regular generalized sampling theory in some structured $T$-invariant subspaces of an abstract separable Hilbert space $\Hc$, where $T$ denotes a bounded invertible operator on $\Hc$. Concretely, for a fixed $a\in \Hc$ these subspaces $\Ac_a$ are constructed by using a representation $h\mapsto \Pi(h)$ of a discrete LCA group $H$ (with additive notation) into the space of bounded invertible operators on $\Hc$ as
\[
\Ac_a=\Big\{\sum_{h\in H} \alpha_h\,\Pi(h) a\,:\, \{\alpha_h\}_{h\in H} \in \ell^2(H)\Big\}\,.
\]
The vector $a$ is called the generator of $\Ac_a$. The most important cases are those related with the representation of the groups $\ZZ$ or $\ZZ_N$ given by $n\mapsto T^n$ which yields the $\Ac_a$-subspaces
\[
\Big\{\sum_{n\in \ZZ} \alpha_n\,T^na\,:\, \{\alpha_n\}_{n\in \ZZ} \in \ell^2 (\ZZ)Ê\Big\}\quad \text{or}\quad \Big\{\sum_{n=0}^{N-1} \alpha(n)\,T^na\,:\, \{\alpha(n)\}_{n=0}^{N-1}\in \CC^N \Big\}\,.
\]
In the last case $T^Na=a$ and the cyclic group $\ZZ_N$ is represented on $\Ac_a$. Consequently, we have to describe the involved generalized samples and to exhibit the look of the obtained sampling formulas. Concerning the samples, consider $s$ fixed elements $b_j\in \Hc$, $j=1, 2,\dots,s$, that do not necessarily belong to $\Ac_a$. We define for each $x\in \Ac_a$  
\begin{equation}
\label{gs}
\Lc_jx(h):=\langle x, \Pi^*(-h)b_j\rangle_\Hc, \quad h\in H\,,
\end{equation}
where $\Pi^*(-h)$ denotes the adjoint operator of $\Pi(-h)$, and we restrict ourselves to the sequence of samples taken at a subspace $M$ of $H$, i.e., $\big\{\Lc_jx(m)\big\}_{m\in M;\,j=1,2,\ldots,s}$.  Regarding the sampling formulas they look like
\[
x=\sum_{j=1}^s \sum_{m \in M}\mathcal{L}_j x(m)\,\Pi(m)c_j \quad \text{in $\mathcal{H}$}\,,
\]
where $c_j\in \Ac_a$, $j=1,2,\ldots, s$, and the sequence $\big\{\Pi(m)c_j\big\}_{m\in M;\,j=1,2,\dots s}$ is a frame for $\mathcal{A}_a$. Thus, we obtain a stable recovery of any $x\in \Ac_a$ from the data sequence $\big\{\Lc_jx(m)\big\}_{m\in M;\,j=1,2,\ldots,s}$. In the alluded examples the samples are,  $\Lc_jx(rm)=\big\langle x, (T^*)^{-rm}b_j\big\rangle_\Hc$\,,\, $m\in \ZZ$ and $j=1, 2, \dots,s$, or $\Lc_jx(rn)=\langle x, (T^*)^{-rn}b_j \rangle_{\Hc}$\,, \, $n=0,1,\ldots,\ell-1$ and $j=1,2,\ldots, s$, respectively, where $r$ is the sampling period, a positive integer in the first case, or a positive integer such that $r|N$ and  $\ell=N/r$, in the second case. The above sampling formula reads
\[
x=\sum_{j=1}^s \sum_{m\in\mathbb{Z}}\mathcal{L}_j x(rm)\,T^{rm}c_j\quad \text{or}\quad x=\sum_{j=1}^s \sum_{m=0}^{\ell-1} \Lc_j x(rm)\, T^{rm} c_j\,.
\]
In the paper, the case of multiply generated subspaces is also considered since it entails new settings not covered by the one generator case. 
The used mathematical technique is very friendly: we express the given sequence of samples as frame coefficients in an auxiliary Hilbert space; the challenge is to obtain $T$-suitable dual frames yielding, via an isomorphism between the auxiliary Hilbert space and $\Ac_a$, the desired sampling formulas. The necessary  background on Riesz bases or frame theory in a separable Hilbert space can be found, for instance, in Ref. \cite{ole:16}.

The case where $T=U$ is an unitary operator in $\Hc$ has been studied in Refs. \cite{hector:14,garcia:16,garcia:15,garcia:17,michaeli:11,pohl:12}, and it generalizes averaged sampling in shift-invariant subspaces in $L^2(\RR)$; whenever $U$ is a shift operator, the samples given in \eqref{gs} are nothing but samples of a convolution operator, i.e., samples of a filtered version of the function itself.

The present sampling study in the $T$-invariant subspace $\Ac_a$ has a double motivation in the recent paper \cite{ole:17}: Firstly, any Riesz sequence $\{x_k\}_{k\in \ZZ}$ in $\Hc$ has a representation $\{T^k x_0\}_{k\in \ZZ}$ for a bounded and bijective operator $T: \overline{\espan} \{x_k\}_{k\in \ZZ}\rightarrow  \overline{\espan}\{x_k\}_{k\in \ZZ}$. Secondly, if $\{V^kg_0\}_{k\in \ZZ}$ with $V$ bounded is a dual frame of $\{T^kf_0\}_{k\in \ZZ}$ where $T$ is bounded and invertible, then $V=(T^*)^{-1}$. 

As it was mentioned in Ref.~\cite{ole:17}, the idea of considering frames (or Riesz) sequences of the form $\{T^na\}_{n\in\ZZ}$ (or $\{T^na\}_{n=0}^{N-1}$) is closely related with dynamical sampling (see, for instance, Refs.\cite{aldroubi1:17,aldroubi2:17}), although the indexing of a frame in the dynamical sampling context is different from the one used here; the group structure is crucial in the sequel.

The paper is organized as follows: in Section \ref{section2} we study the $\ZZ$-infinite case with a single generator, i.e., sampling formulas in $\Ac_a=\big\{\sum_{n\in \ZZ} \alpha_n\,T^na\,:\, \{\alpha_n\}_{n\in \ZZ} \in \ell^2 (\ZZ)Ê\big\}$; in Section \ref{section3} we present the $\ZZ$-infinite case with multiple generators, i.e., sampling formulas in $\mathcal{A}_{\mathbf{a}}=\Big\{\sum_{l=1}^L \sum_{n\in\mathbb{Z}}\alpha_n^l\,T^na_l:\, \{\alpha_n^l\}_{n\in \mathbb{Z}} \in \ell^2(\mathbb{Z});\, l=1, 2.\dots, L \Big\}$; Sections \ref{section4} and \ref{section5} are devoted to the finite case where both subspaces $\Ac_a$ or $\mathcal{A}_{\mathbf{a}}$ are finite dimensional; finally, in Section \ref{section6} the abstract case associated with an LCA group is exhibited. The  mathematical development followed along the paper  shares some patterns appearing in the case of an unitary operator $U$ studied in previous works (see \cite{hector:14,garcia:16,garcia:15,garcia:17}); some proofs which mimic similar results will be simply referred. Putting all these cases together can help to survey the intrinsic nature of these sampling problems and their relationships. As the sections only include the essential sampling theory, they are accompanied with a pertinent list of notes and remarks enlightening the topic.

\section{The $\mathbb{Z}$-infinite case with a single generator}
\label{section2}
Let $T:\Hc \rightarrow \Hc$ be a bounded invertible operator defined in a separable Hilbert space $\Hc$. For a fixed $a\in \Hc$ we consider the T-invariant subspace $\Ac_a$ in $\Hc$ defined as $\Ac_a:=\overline{\espan} \{T^na\,:\, n\in \ZZ\}$. In case the sequence $\{T^na\}_{n\in \ZZ}$ is a {\em Riesz sequence} in $\Hc$, i.e., a {\em Riesz basis} for $\Ac_a$, this subspace can be expressed as
\[
\Ac_a=\Big\{\sum_{n\in \ZZ} \alpha_n\,T^na\,:\, \{\alpha_n\}_{n\in \ZZ} \in \ell^2 (\ZZ)Ê\Big\}\,.
\]
The subspace $\mathcal{A}_a$ is the image of the usual Hilbert space $L^2(0, 1)$ by means of the isomorphism
\[
\begin{array}[c]{ccll}
\mathcal{T}_{T,a}: & L^2(0, 1) & \longrightarrow & \mathcal{A}_a\\
       & F=\displaystyle{\sum_{n\in \mathbb{Z}} \alpha_n\,{\rm e}^{2\pi {\rm i} nw} } & \longmapsto & x=\displaystyle{\sum_{n\in \mathbb{Z}} \alpha_n\,T^na}\,.
\end{array}
\]
It is easy to check that this isomorphism $\mathcal{T}_{T,a}$ satisfies the {\em $T$-shifting property}
\begin{equation}
\label{shifting1}
\mathcal{T}_{T,a}\big(F {\rm e}^{2\pi {\rm i} mw} \big)=T^m \big(\mathcal{T}_{T,a} F\big)\quad \text{for any $F\in L^2(0, 1)$ and $m\in \ZZ$}\,.
\end{equation}

\subsubsection*{An expression for the samples}
We start by stating the used data to recover any $x\in \Ac_a$.
Given $s$ fixed elements $b_j\in \Hc$  (which do not necessarily belong to $\Ac_a$) and the {\em sampling period} $r$, an integer $r\geq 1$, we define the {\em generalized samples} 
$\{\Lc_jx(rm)Ê\}_{m\in \mathbb{Z};\,j=1,2,\ldots,s}$ of any $x\in \Ac_a$ as 
\begin{equation}
\label{samples1}
\Lc_jx(rm):=\big\langle x, (T^*)^{-rm}b_j\big\rangle_\Hc\,,\quad \text{ $m\in \ZZ$\, and \, $j=1, 2, \dots,s$}\,,
\end{equation}
where $T^*$ denotes the (invertible) adjoint operator of $T$.

\noindent For $x=\sum_{n\in \mathbb{Z}} \alpha_n\,T^na$ we obtain the following expression for its samples
\begin{equation}
\label{samples2}
\begin{split}
\mathcal{L}_j x(rm)&=\big \langle \sum_{n\in \mathbb{Z}} \alpha_n\,T^na, (T^*)^{-rm}b_j\big\rangle_\Hc=\sum_{n\in \mathbb{Z}}\alpha_n\,\overline{\big\langle (T^*)^{-rm}b_j, T^na \big \rangle} _\Hc\\
&=\Big\langle F, \sum_{n\in \mathbb{Z}} \langle (T^*)^{n-rm}b_j, a\rangle_\mathcal{H}\, {\rm e}^{2\pi {\rm i} nw}\Big\rangle_{L^2(0, 1)}=
\big\langle F, g_j(w)\,{\rm e}^{2\pi {\rm i} rmw} \big\rangle_{L^2(0, 1)}\,,
\end{split}
\end{equation}
where the functions  $F(w)=\sum_{n\in \mathbb{Z}} \alpha_n\, {\rm e}^{2\pi {\rm i} nw}$ and $g_j(w)=\sum_{k\in \mathbb{Z}} \langle (T^*)^{k}b_j, a\rangle_\mathcal{H}\, {\rm e}^{2\pi {\rm i} kw}$, $j=1, 2, \dots,s$, belong to $L^2(0, 1)$. 

\medskip

Thus the stable recovery of $F\in L^2(0, 1)$ (and consequently of $x=\mathcal{T}_{T,a} F\in \Ac_a$) from the sequence of generalized samples $\{\Lc_jx(rm)Ê\}_{m\in \mathbb{Z};\,j=1,2,\ldots,s}$ depends on whether the sequence $\{g_j(w)\,{\rm e}^{2\pi {\rm i} rmw}\}_{m\in \mathbb{Z};\,j=1,2,\ldots,s}$ forms a {\em frame} for $L^2(0, 1)$. Moreover, in order to derive an associated sampling formula we also need to know  {\em dual frames} having the same structure (in order to apply the $T$-shifting property \eqref{shifting1}).

\medskip

For the first question, consider the $s\times r $ matrix of functions in $L^2(0,1)$
\begin{equation}
\label{Gmatrix1} 
\mathbb{G}(w):=
\begin{pmatrix} g_1(w)& g_1(w+\frac{1}{r})&\cdots&g_1(w+\frac{r-1}{r})\\
g_2(w)& g_2(w+\frac{1}{r})&\cdots&g_2(w+\frac{r-1}{r})\\
\vdots&\vdots&&\vdots\\
g_s(w)& g_s(w+\frac{1}{r})&\cdots&g_s(w+\frac{r-1}{r})
\end{pmatrix}= 
\bigg(g_j\Big(w+\frac{k-1}{r}\Big)\bigg)_{\substack{j=1,2,\ldots,s \\ k=1,2,\ldots, r}}
\end{equation}
and its related constants 
\[
\alpha_{\mathbb{G}}:=\einf_{w \in (0,1/r)}\lambda_{\min}[\mathbb{G}^*(w)\mathbb{G}(w)]\,;\quad
\beta_{\mathbb{G}}:=\esup_{w \in (0,1/r)}\lambda_{\max}[\mathbb{G}^*(w)\mathbb{G}(w)]\,.
\]
As usual, the symbol $*$ denotes the transpose conjugate matrix and $\lambda_{\min}$ (respectively $\lambda_{\max}$) the smallest (respectively the largest) eigenvalue of the positive semidefinite matrix 
$\mathbb{G}^*(w)\mathbb{G}(w)$. 

A characterization of the sequence $\big\{ g_j(w)\, {\rm e}^{2\pi {\rm i} rmw} \big\}_{m\in \mathbb{Z};\,  j=1,2,\dots, s}$ as a complete system, Bessel sequence, frame or Riesz basis for $L^2(0,1)$ is well known (see, for instance, Refs.~\cite{garcia:06,garcia:08}). In particular,

\medskip 

{\em The sequence $\{g_j(w)\,{\rm e}^{2\pi {\rm i} rmw}\}_{m\in \mathbb{Z};\,j=1,2,\ldots,s}$ is a frame for $L^2(0,1)$ if and only if $0<\alpha_{\mathbb{G}}\le \beta_{\mathbb{G}}<\infty$. In this case, the optimal frame bounds are $\alpha_{\mathbb{G}}/r$ and  $\beta_{\mathbb{G}}/r$.}

\medskip

For the second question, the existence of dual frames of $\{g_j(w)\,{\rm e}^{2\pi {\rm i} rmw}\}_{m\in \mathbb{Z};\,j=1,2,\ldots,s}$ with its same structure, choose functions $h_j$ in $L^\infty(0, 1)$, $j=1,2,\dots,s$, such that
\begin{equation}
\label{dual1}
\big(h_1(w), h_2(w), \dots, h_s(w)\big) \mathbb{G}(w)=\big(1, 0, \dots, 0\big) \quad \text{a.e. in $(0, 1)$}\,.
\end{equation}
In \cite{garcia:06} it was proven  that

\medskip

{\em The sequence $\{r \overline{h_{j}(w)}\,{\rm e}^{2\pi {\rm i} rmw}\}_{m\in\mathbb{Z};\,j=1,2,\ldots,s}$, where the functions $h_j$ in $L^\infty(0, 1)$ satisfy \eqref{dual1}, is a dual frame of the sequence 
$\{g_{j}(w)\,{\rm e}^{2\pi {\rm i} rmw}\}_{m\in\mathbb{Z};\,j=1,2,\ldots,s}$ in $L^2(0, 1)$.}

\medskip

In other words, taking into account the expression for the samples \eqref{samples2},  for any $F\in L^2(0, 1)$ we have   the expansion
\begin{equation}
\label{expansion1}
F=\sum_{j=1}^s \sum_{m\in \mathbb{Z}} \mathcal{L}_j x(rm)\,r \overline{h_j(w)}\,{\rm e}^{2\pi {\rm i}rmw}\quad \text{in $L^2(0, 1)$}\,.
\end{equation}

\medskip

Concerning to the existence of the functions $h_j$, $j=1, 2, \dots, s$, satisfying \eqref{dual1}, consider the first row of the $r\times s$ {\em Moore-Penrose pseudo-inverse} $\mathbb{G}^\dag(w)$ of $\mathbb{G}(w)$ given by $\mathbb{G}^\dag(w)=\big[\mathbb{G}^*(w)\,\mathbb{G}(w)\big]^{-1}\,\mathbb{G}^*(w)$. Its entries are essentially bounded in $(0, 1)$ since the functions $g_j$, $j=1,2,\dots, s$, 
and $\det^{-1}\big[\mathbb{G}^* (w)\, \mathbb{G}(w)\big]$ are essentially bounded in $(0, 1)$, and \eqref{dual1} trivially holds. In fact, all the possible solutions of \eqref{dual1} are given by the first row of the $r\times s$ matrices given by
\[
\mathbb{H}(w):=\mathbb{G}^\dag(w)+\mathbb{U}(w)\big[\mathbb{I}_s-\mathbb{G}(w)\mathbb{G}^\dag(w)\big]\,,
\]
where $\mathbb{U}(w)$ denotes any $r\times s$ matrix with entries in $L^\infty(0, 1)$, and $\mathbb{I}_s$ is the identity matrix of order $s$.
 
\subsubsection*{A regular sampling formula in $\Ac_a$}

Given $x=\mathcal{T}_{T,a} F\in \mathcal{A}_a$, applying the isomorphism $\mathcal{T}_{T,a}$ to the expansion \eqref{expansion1} for $F$ one obtains the sampling expansion
\[
\begin{split}
x&=\sum_{j=1}^s \sum_{m\in \mathbb{Z}} \mathcal{L}_j x(rm)\, \mathcal{T}_{T,a}\big[r \overline{h_j(\cdot)}\,{\rm e}^{2\pi {\rm i}rm\,\cdot}\big]=
\sum_{j=1}^s \sum_{m\in \mathbb{Z}} \mathcal{L}_j x(rm)\, T^{rm} \big[\mathcal{T}_{T,a}(r\overline{h}_j) \big]\\
&=\sum_{j=1}^s \sum_{m\in \mathbb{Z}} \mathcal{L}_j x(rm)\, T^{rm} c_{j,h} \quad \text{in $\mathcal{H}$}\,,
\end{split}
\]
where $c_{j, h}:=\mathcal{T}_{T,a}(r\overline{h}_j)\in \mathcal{A}_a$, $j=1,2,\dots, s$, and we have used the $T$-shifting property \eqref{shifting1}. Besides, the sequence 
$\big\{T^{rm} c_{j,h}  \big\}_{m\in\mathbb{Z};\,j=1,2,\ldots,s}$ is a frame for $\mathcal{A}_a$. In fact, the following result holds:  
\begin{theorem}
\label{regular1}
For any $x\in \Ac_a$ consider the sequence of samples $\{\mathcal{L}_jx(rm)\}_{m\in \mathbb{Z};\,j=1,2,\dots, s}$ defined in \eqref{samples1}.  Assume that the functions $g_j$, $j=1,2,\dots,s$, given in \eqref{samples2} belong to $L^\infty(0, 1)$, and consider the associated $\mathbb{G}(w)$ matrix given in \eqref{Gmatrix1}. The following statements are equivalent:
\begin{enumerate}[(a)]
\item The constant $\alpha_{\mathbb{G}}>0$.
\item There exists a vector $\big(h_1(w), h_2(w), \dots, h_s(w)\big)$ with entries in $L^\infty(0, 1)$ and satisfying 
\[
\big(h_1(w), h_2(w), \dots, h_s(w)\big) \mathbb{G}(w)=\big(1, 0, \dots, 0\big) \quad \text{a.e. in $(0, 1)$}\,.
\]
\item There exist $c_{j,h}\in \mathcal{A}_a$, $j=1,2, \dots, s$, such that the sequence 
$\big\{T^{rm}c_{j,h}\big\}_{m\in \mathbb{Z};\,j=1,2,\dots, s}$ is a frame for $\mathcal{A}_a$, and for any $x\in \mathcal{A}_a$ the expansion
\begin{equation}
\label{sampling1}
x=\sum_{j=1}^s \sum_{m\in\mathbb{Z}}\mathcal{L}_j x(rm)\,T^{rm}c_{j,h} \quad \text{in $\mathcal{H}$}
\end{equation}
holds.
\item There exists a frame $\big\{C_{j,m}\big\}_{m\in \mathbb{Z};\,j=1,2,\dots, s}$ for $\mathcal{A}_a$ such that, for each $x\in \mathcal{A}_a$ the expansion
\[
x=\sum_{j=1}^s \sum_{m\in\mathbb{Z}}\mathcal{L}_{j} x(rm)\,C_{j,m} \quad \text{in $\mathcal{H}$}
\]
holds.
\end{enumerate}
\end{theorem}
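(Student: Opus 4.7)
The plan is to prove the cyclic chain (a)$\Rightarrow$(b)$\Rightarrow$(c)$\Rightarrow$(d)$\Rightarrow$(a), using identity \eqref{samples2} as the bridge that turns statements about the samples of $x\in\mathcal{A}_a$ into equivalent statements about the sequence $\{g_j(w)\,e^{2\pi{\rm i} rmw}\}_{m,j}$ in $L^2(0,1)$ via the isomorphism $\mathcal{T}_{T,a}$. Three of the four implications are essentially packaged in the discussion preceding the theorem; the genuine work lies in the last one.

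\textbf{The first three implications.} For (a)$\Rightarrow$(b) I would use the Moore--Penrose construction already sketched in the paragraph following \eqref{expansion1}: the assumption $\alpha_{\mathbb{G}}>0$ makes $[\mathbb{G}^*\mathbb{G}]^{-1}$ essentially bounded, and since $g_j\in L^\infty(0,1)$ gives $\mathbb{G}^*\in L^\infty$, the pseudo-inverse $\mathbb{G}^\dagger=[\mathbb{G}^*\mathbb{G}]^{-1}\mathbb{G}^*$ has entries in $L^\infty(0,1)$; its first row furnishes $(h_1,\dots,h_s)$, since the first row of $\mathbb{G}^\dagger\mathbb{G}=\mathbb{I}_r$ is precisely $(1,0,\dots,0)$. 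For (b)$\Rightarrow$(c), the cited result of \cite{garcia:06} provides $\{r\overline{h_j(w)}\,e^{2\pi{\rm i} rmw}\}$ as a dual frame of $\{g_j(w)\,e^{2\pi{\rm i} rmw}\}$ in $L^2(0,1)$; pushing this pair through $\mathcal{T}_{T,a}$ and invoking the $T$-shifting property \eqref{shifting1} produces the sampling formula \eqref{sampling1} with $c_{j,h}:=\mathcal{T}_{T,a}(r\overline{h}_j)$, while $\{T^{rm}c_{j,h}\}$ is a frame for $\mathcal{A}_a$ because Hilbert space isomorphisms preserve frames. The implication (c)$\Rightarrow$(d) is immediate on setting $C_{j,m}:=T^{rm}c_{j,h}$.

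\textbf{The main obstacle: (d)$\Rightarrow$(a).} This is the one direction that requires an actual argument, as the hypothesis is purely qualitative while the conclusion demands a quantitative essential lower bound on the eigenvalues of $\mathbb{G}^*\mathbb{G}$. Starting from the expansion in (d), pairing both sides against $x$ yields
\[
\|x\|^2 = \sum_{j=1}^{s}\sum_{m\in\mathbb{Z}}\mathcal{L}_j x(rm)\,\langle C_{j,m}, x\rangle_{\mathcal{H}},
\]
so Cauchy--Schwarz together with the upper frame bound $B$ of $\{C_{j,m}\}$ gives the lower sampling inequality $\sum_{j,m}|\mathcal{L}_j x(rm)|^2 \geq B^{-1}\|x\|^2$ for every $x\in\mathcal{A}_a$. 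Transporting this through $\mathcal{T}_{T,a}$ by means of \eqref{samples2} translates it into the lower frame inequality $\sum_{j,m}|\langle F, g_j(w)\,e^{2\pi{\rm i} rmw}\rangle|^2 \geq B^{-1}\|F\|^2$ valid for all $F\in L^2(0,1)$. Since $g_j\in L^\infty(0,1)$ already forces $\beta_{\mathbb{G}}<\infty$ (hence the Bessel bound), the sequence $\{g_j(w)\,e^{2\pi{\rm i} rmw}\}_{m,j}$ is then a frame for $L^2(0,1)$, and the characterization quoted just before the statement of Theorem~\ref{regular1} forces $\alpha_{\mathbb{G}}>0$, closing the loop.
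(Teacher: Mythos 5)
Your proposal is correct and follows essentially the same route as the paper: the implications (a)$\Rightarrow$(b)$\Rightarrow$(c)$\Rightarrow$(d) are exactly the ones the paper extracts from the discussion preceding the theorem, and your (d)$\Rightarrow$(a) argument (lower sampling inequality from the expansion and the upper frame bound of $\{C_{j,m}\}$, transported to $L^2(0,1)$ via $\mathcal{T}_{T,a}$ and combined with the Bessel bound coming from $g_j\in L^\infty$) is precisely the argument the paper delegates to Theorem 3.1 of \cite{hector:14}. The only nitpick is that after transporting through the isomorphism the lower bound is $B^{-1}\|\mathcal{T}_{T,a}^{-1}\|^{-2}\|F\|^2$ rather than $B^{-1}\|F\|^2$, which of course does not affect the conclusion that a positive lower frame bound exists.
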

\begin{proof}
First notice that the equivalence between the spectral and Frobenius norms \cite{horn:99} proves that the functions $g_j$, $j=1, 2, \dots, s$, belong to $L^\infty (0,1)$ if and only if  $\beta_{\mathbb{G}}<\infty$. We have already proved that $(a)$ implies $(b)$, and that $(b)$ implies $(c)$. Obviously, $(c)$ implies $(d)$.
The proof concludes as that of Theorem 3.1 in \cite{hector:14}.
\end{proof}
\subsubsection*{The filter-bank approach}
The generalized samples $\{(\mathcal{L}_{j}x)(rm)\}_{m\in \mathbb{Z};\,j=1,2,\dots, s}$ defined in \eqref{samples1} can be seen as the output of an analysis filter-bank. To check this, notice that for $x=\sum_{n\in \mathbb{Z}} \alpha_{n}T^n a \in \mathcal{A}_{a}$ and $\mathcal{L}_{j}x$ (defined on $\ZZ$) we have
\[
\mathcal{L}_{j}x(m)= (\alpha \ast \mathbf{h}_{j})(m), \,\,\, m\in \ZZ\,,\quad \text{where}\quad \mathbf{h}_{j}(n):=\left\langle a, (T^*)^{-n} b_{j}\right\rangle_{\mathcal{H}},\,\,\, n\in \mathbb{Z}.
\]
Indeed,
$(\mathcal{L}_{j}x) (m)=\left\langle \sum_{n\in \mathbb{Z}} \alpha_{n}T^n a , (T^*)^{-m} b_{j}\right\rangle_{\mathcal{H}}=
 \sum_{n\in \mathbb{Z}} \alpha_{n} \left\langle a , (T^*)^{n-m} b_{j}\right\rangle_{\mathcal{H}}$.
Thus, the sequence of samples $\big\{(\mathcal{L}_{j}x)(rm)\big\}_{m\in \mathbb{Z};\,j=1,2,\dots, s}$ is the output of the {\em analysis filter-bank} with {\em downsampling} $r$
\[
\alpha \longmapsto \Big( \big\{\alpha \ast \mathbf{h}_{1}(rm)\big\}_{m\in\mathbb{Z}},\ldots,
\big\{\alpha \ast \mathbf{h}_{s}(rm)\big\}_{m\in\mathbb{Z}} \Big)\,.
\]
Any sequence $\alpha \in \ell^2(\mathbb{Z})$ can be recovered from these samples by using a {\em synthesis filter-bank}: 
\[
\big( y_{1}, y_2,\ldots,
y_{s} \big) \longmapsto \bigg\{\sum_{j=1}^s \sum_{m\in \mathbb{Z} }y_{j}(m) \mathbf{g}_{j}(n-mr)\bigg\}_{n\in \mathbb{Z}}\,,
\]
whenever the above analysis  and synthesis filter-banks
\[
\alpha \longmapsto \bigg\{\sum_{j=1}^s  \sum_{m\in \mathbb{Z} } (\alpha \ast \mathbf{h}_{j})(rm) \mathbf{g}_{j}(n-mr)\bigg\}_{n\in \mathbb{Z}}
\]
provides {\em perfect reconstruction}, i.e., for every $\alpha=\{\alpha_n\}_{n\in \ZZ}\in \ell^2(\mathbb{Z})$ we have
\begin{equation}
\label{PR}
\alpha_n = \sum_{j=1}^s  \sum_{m\in \mathbb{Z} } (\alpha \ast \mathbf{h}_{j})(rm) \mathbf{g}_{j}(n-mr)\,, \quad n \in \mathbb{Z}\,.
\end{equation} 
For instance, assuming that $\mathbf{h}_{j}, \mathbf{g}_{j}\in \ell^1(\ZZ)$  for $j=1,2,\dots,s$, perfect reconstruction holds
if and only if  $\mathbf{G}(z)\mathbf{H}(z)=\mathbb{I}_r$ in the torus $\mathbb{T}=\{z\in \CC \,:\,|z|=1\}$ (see \cite{bol:98,vetterli:98,strang:96}),
where the $r\times s$ matrix $\mathbf{G}(z)$ and the $s\times r$ matrix $\mathbf{H}(z)$ given, respectively, by
\[
\mathbf{H}(z)=\bigg( \sum_{m\in\mathbb{Z}} \mathbf{h}_{j}(rm-k) z^{-m}\bigg)_{\substack{j=1,2,\ldots,s \\ k=0,1,\ldots, r-1}}\text{and} \,\, \mathbf{G}(z)=\bigg( \sum_{m\in\mathbb{Z}} \mathbf{g}_{j}(rm+k) z^{-m}\bigg)_{\substack{k=0,1,\ldots,r-1 \\ j=1,2,\ldots, s}}
\] 
are the so called {\em polyphase matrices} associated to the above filter-banks. Equivalently, the sequences $\{\mathbf{\overline{h}}_j(rm-\cdot)\}_{m\in \mathbb{Z};\,j=1,2,\dots, s}$ and $\{\mathbf{g}_j(\cdot-rm)\}_{m\in \mathbb{Z};\,j=1,2,\dots, s}$ form a pair of dual frames in $\ell^2(\ZZ)$.

Summarizing, assuming perfect reconstruction on the above filter-banks we can recover any sequence $\alpha=\{\alpha_n\}_{n\in \ZZ}$ in $\ell^2(\mathbb{Z})$ from the generalized samples \eqref{samples1} by using formula \eqref{PR}. As a consequence, we can recover the corresponding $x\in \mathcal{A}_{a}$ by using $x=\sum_{n\in \mathbb{Z}} \alpha_{n}T^n a$.

\subsubsection*{Notes and remarks}
Some comments on the results appearing in this section are pertinent; most of them will be shared in next sections:
\begin{enumerate}
\item The fact of considering subspaces as $\Ac_a$ is reinforced by a result proved in Ref.\cite[Corollary 2.4]{ole:17}: Any Riesz sequence $\{x_k\}_{k\in \ZZ}$ in $\Hc$ has a representation $\{T^k x_0\}_{k\in \ZZ}$ for a bounded and bijective operator $T: \overline{\espan} \{x_k\}_{k\in \ZZ}\rightarrow  \overline{\espan}\{x_k\}_{k\in \ZZ}$.

\item In case the operator $T=U$ is unitary, the auto-covariance $\langle U^ka, a \rangle_\mathcal{H}$, $k\in \ZZ$, of the sequence $\{U^ka\}_{k\in \ZZ}$ admits the integral representation (see Ref.\cite{kolmogorov:41})
\[
\langle U^ka, a \rangle_\mathcal{H}=\frac{1}{2\pi}\int_{-\pi}^\pi {\rm e}^{{\rm i}k\theta}d\mu_a(\theta)\,,\qquad k\in\mathbb{Z}\,,
\]
where $\mu_a$ is  a positive Borel measure on $(-\pi, \pi)$ called the {\em spectral measure} of the sequence $\{U^ka\}_{k\in \ZZ}$. The spectral measure $\mu_a$ can be decomposed into an absolute continuous and a singular part as $d\mu_a(\theta)=\phi_a(\theta)d\theta+ d\mu^{s}_a(\theta)$ with respect to Lebesgue measure. A necessary and sufficient condition for the sequence $\{U^na\}_{n\in \mathbb{Z}}$ to be a Riesz sequence for $\mathcal{H}$ is that the singular part $\mu_a^{s}\equiv 0$ and the {\em spectral density} $\phi_a$ satisfies
\[
0< \einf_{\theta \in (-\pi, \pi)}{\phi_a(\theta)}\leq \esup_{\theta \in (-\pi, \pi)}{\phi_a(\theta)}<\infty \,;
\]
see, for instance, Ref.~\cite{hector:14}. In particular, whenever $U$ is the shift operator $f(u)\mapsto f(u-1)$ in $L^2(\RR)$, the above condition yields the classical condition (see, for instance, Ref.~\cite{ole:16})
\[
0< \einf_{\theta \in (-\pi, \pi)} \sum_{n\in \mathbb{Z}}|\widehat{\varphi}(\theta+2\pi n)|^2 \le \esup_{\theta \in (-\pi, \pi)}\sum_{n\in \mathbb{Z}}|\widehat{\varphi}(\theta+2\pi n)|^2 <\infty\,.
\]
As far as we know, a characterization of the sequence $\{T^na\}_{n\in \ZZ}$ as a Riesz sequence for $\Hc$ remains an open question; even when the operator $T$ is selfadjoint or normal.

\item In case the sequence $\{T^ka\}_{k\in \ZZ}$ is a frame sequence for $\Hc$, the operator $\mathcal{T}_{T,a}$ is bounded and surjective. The sampling formula \eqref{sampling1} still remains valid as a frame expansion in $\Ac_a$.

\item The choice of the generalized samples as in \eqref{samples1} is motivated by a result in Ref.\cite[Lemma 3.3]{ole:17}: If $\{V^kg_0\}_{k\in \ZZ}$ with $V$ bounded is a dual frame of $\{T^kf_0\}_{k\in \ZZ}$ where $T$ is bounded and invertible, then $V=(T^*)^{-1}$.

\item In Theorem \ref{regular1} it can be added the equivalent condition 
\[
\einf_{w \in (0,1/r)}\det [\mathbb{G}^*(w)\mathbb{G}(w)]>0\,.
\]
In case the $1$-periodic extension of the functions $g_j$, $j=1, 2, \dots, s$, are continuous on $\RR$, this condition reduces to say that $\rank \,\mathbb{G}(w)=r$ for all $w\in \RR$.

\item In the {\em overcomplete} setting we have that $s>r$. In case $r=s$, the frame condition $(c)$ in Theorem \ref{regular1} becomes a Riesz basis condition: There exist $r$ unique elements $c_j\in \Ac_a$, $j=1, 2, \dots, r$, such that the sequence $\big\{T^{rm}c_j\big\}_{m\in \mathbb{Z};\,j=1,2,\dots, r}$ is a {\em Riesz basis} for $\mathcal{A}_a$, and the sampling expansion \eqref{sampling1} holds. Moreover, due to the uniqueness of the coefficients in a Riesz basis expansion, the {\em interpolation property} 
$\mathcal{L}_{j'} c_j(rm)=\delta_{j,j'}\, \delta_{m,0}$, where $m\in \mathbb{Z}$ and $j, j'=1,2,\dots, r$, holds (for a similar result, see \cite[Corollary 3.3]{hector:14}).

\item Let us to take a closer look at the analyzing sequence $\big\{(T^*)^{-rm}b_j\big\}_{m\in \mathbb{Z};\,j=1,2,\dots,s}$ which appears in the definition of the generalized samples \eqref{samples1}. Having in mind \eqref{samples2} and the isomorphism $\mathcal{T}_{T,a}$, we have the inequalities
\[
\frac{\alpha_{\mathbb{G}}}{r} \|\mathcal{T}_{T,a}\|^{-2} \|x\|^2 \le \sum_{j=1}^s\sum_{m\in \mathbb{Z}} \big|\langle x, (T^*)^{-rm} b_j\rangle \big|^2 \le \frac{\beta_{\mathbb{G}}}{r} \|\mathcal{T}_{T,a}^{-1}\|^{2} \|x\|^2 \quad \text{for all $x\in \mathcal{A}_a$}\,.
\] 
The sequence $\big\{(T^*)^{-rm}b_j\big\}_{m\in \mathbb{Z};\,j=1,2,\dots, s}$ is not contained in $\mathcal{A}_a$ except for some particular cases such as whenever all $b_j\in \Ac_a$ and operator $T$ is selfadjoint or unitary. Therefore, as a consequence of the above inequalities, the sequence  
$\big\{(T^*)^{-rm}b_j\big\}_{m\in \mathbb{Z};\,j=1,2,\dots, s}$ is a {\em pseudo-dual frame} of $\big\{T^{rm}c_{j,h}\big\}_{m\in \mathbb{Z};\,j=1,2,\dots, s}$ in 
$\mathcal{A}_a$ (see Refs.~\cite{li:01,li:04}). In other words, denoting by $P_{\mathcal{A}_a}$ the orthogonal projection onto 
$\mathcal{A}_a$, we derive  that the sequence $\big\{P_{\mathcal{A}_a}\big((T^*)^{-rm}b_j\big)\big\}_{m\in \mathbb{Z};\,j=1,2,\dots, s}$ is a dual frame of 
$\big\{T^{rm}c_{j,h}\big\}_{m\in \mathbb{Z};\,j=1,2,\dots, s}$ in $\mathcal{A}_a$.

Whenever $r=s$, the sequence $\big\{(T^*)^{-rm}b_j\big\}_{m\in \mathbb{Z};\,j=1,2,\dots ,s}$ is, except for some particular cases, a {\em pseudo-Riesz basis} for $\mathcal{A}_a$.

\item The oversampling technique, i.e.  $s>r$, allows to obtain sampling reconstruction formulas with prescribed properties. See example below and Refs.\cite{hector:14,garcia:09}.
\end{enumerate}

Theorem \ref{regular1} comprises all the known results concerning average regular sampling  in shift-invariant spaces which appear in the mathematical literature. For a few selected references see, for instance, Refs.~\cite{aldroubi:02,ole:04,ole:05,garcia:06,garcia:08,kang:11,sun:03,unser:94,unser:98,walter:92}. 

\subsubsection*{An easy illustrative example}
In $\mathcal{H}=\ell^2(\mathbb{Z})$ we consider the unitary operator $Tx(\cdot)=x(\cdot-K)$, where $K\in \mathbb{N}$ denotes a fixed delay. Now, let $a\in \ell^2(\ZZ)$ such that the sequence $\{a(\cdot-nK)\}_{n\in \ZZ}$ forms a Riesz sequence for $\ell^2(\ZZ)$. According to note 2 above, this happens in case the sequence $\{\langle T^na, a\rangle_{\ell^2(\ZZ)}\}_{n\in \ZZ}\in \ell^2(\ZZ)$, which  in turn occurs, for instance, whenever $a \in \ell^1(\ZZ)$,  and the function having these Fourier coefficients should be essentially bounded above and below away from zero. Consider  the corresponding $T$-invariant subspace $\mathcal{A}_{a}$.

First assume $r=s=1$;  for a fixed $b\in \ell^2(\ZZ)$, for each $x\in \Ac_a$, the samples are $\mathcal{L}x(n)=\left\langle x, b(\cdot-nK) \right\rangle_{ \ell^2(\mathbb{Z})}, \,\, n\in \mathbb{Z}$. Thus, the auxiliary function $g$ reads
\[
g(w)=\sum_{n\in \mathbb{Z} } \left\langle b(\cdot+nK),a \right\rangle_{ \ell^2(\mathbb{Z})} e^{2\pi i n w}\in L^2(0,1)\,.
\]
Assuming that $g\in L^\infty(0,1)$, Theorem \ref{regular1} says that there exists a (unique) sequence $c\in \mathcal{A}_{a}$ such that  the expansion
\begin{equation*}
\label{for}
x(m)= \sum_{n\in \mathbb{Z} }  \left\langle x, b(\cdot-nK) \right\rangle_{ \ell^2(\mathbb{Z})}\, c(m-nK),\quad m\in \ZZ\,,
\end{equation*} 
holds for all $x\in \Ac_a$ if and only if $\einf_{w \in (0,1/r)} |g(w)|>0$. The sequence $\{c(\cdot-nK)\}_{n\in \mathbb{Z}}$ is a Riesz basis for 
$\mathcal{A}_{a}$ (see note 6 above), and $c(m)=\sum_{n\in \mathbb{Z} } \beta_{n} \, a(m-nK)$ where the coefficients  
$\beta_{n}$ are obtained from the Fourier expansion  $1/\overline{g(w)}= \sum_{n\in \mathbb{Z} } \beta_{n} e^{2\pi in w}$.

The choice of the delta sequence as $b$ in the above formula yields for each $x\in \mathcal{A}_{a}$
\begin{equation}
\label{for1}
x(m)= \sum_{n\in \mathbb{Z} }   x(nK)\, c(m-nK)\,, \quad m\in \ZZ\,.
\end{equation}
Furthermore, the sequence $c\in \mathcal{A}_{a}$ satisfies the interpolation property $c(nK)=\delta(n)$, $n\in \ZZ$.

\medskip

For $a=M_{p}$ the {\em central discrete B-spline} defined (assuming that $K$ is odd) by 
\[
M_{p}:=\underbrace{M_{1}\ast\ldots \ast M_{1}}_{\text{(p times)}}\,\, \text{  where }
M_{1}(n)=\begin{cases}1,&|n|\le (K-1)/2\\ 0,& |n|>(K-1)/2\end{cases}\,,
\] 
$\mathcal{A}_{a}$ becomes the subspace of discrete splines  with  nodes at $\{nK\}$ of order $p$ (see Refs.~\cite{zhe:00,zhe:02}). The B-spline $M_{p}(n)$ is even, positive, supported in $|n|\le p(K-1)/2$, and the sequence $\{M_{p}(\cdot-nK)\}_{n\in\mathbb{Z}}$ is a Riesz sequence for $\ell^2(\ZZ)$  \cite[Remark 3.3]{zhe:02}. 
Furthermore, the cosine polynomial 
\[
g(w)= \sum_{n\in \mathbb{Z} } M_{p}(nK) e^{2\pi i n w} 
\]
is strictly positive (see Ref.~\cite[Theorem 2.2]{zhe:00}); as a consequence, the interpolatory formula \eqref{for1} holds.  

A drawback is that the reconstruction function $c$ in \eqref{for1} is not compactly supported. A way to overcome this difficulty is by using the oversampling technique, i.e., taking $s>r$. For instance, consider $a=M_{p}, r=1$, but 
$s=2$ with $b_{1}=\delta$ and $b_{2}=\delta(\cdot-1)$.
Thus, we get $g_{1}(w)=G_{1}(e^{-2\pi i w})$ and $g_{2}(w)=G_{2}(e^{-2\pi i w})$ where
\[
G_{1}(z)=\sum_{n\in \mathbb{Z} } M_{p}(nK) z^n\quad  \text{and}\quad
G_{2}(z)=\sum_{n\in \mathbb{Z} } M_{p}(nK+1) z^n
\]
are Laurent polynomials in $z$. If $G_{1}$ and $G_{2}$ are coprime, one can find Laurent polynomials $H_{1}$ and $H_{2}$ such that $G_{1}(z)H_{1}(z)+G_{2}(z)H_{2}(z)=1$.
As a consequence,  the functions $h_{1}(w)=H_{1}(e^{-2\pi i w})$ and  $h_{2}(w)=H_{2}(e^{-2\pi i w})$, that satisfy condition $(b)$ in Theorem \ref{regular1}, yield reconstruction  sequences $c_{1}$ and $c_{2}$ in formula \eqref{for1}, which are compactly supported. For instance, taking $K=3$ and the cubic B-spline $M_{4}$ one gets $G_{1}(z)=4 z^{-1}+19+4 z,\,G_{2}(z)=10 z^{-1}+16 +z$. Euclid's algorithm gives $H_{1}(z)=-\frac{38}{243}z-\frac{5}{486}z^2$ and
$H_{2}(z)=\frac{79}{486}z+\frac{10}{243}z^2$ from which compactly supported sequences $c_{1}$ and $c_{2}$ are derived. 

The above oversampling rate is $s/r=2$. Less oversampling rates, as $s/r=(q+1)/q$, can be used; the prize to pay is a bigger size of the support for the reconstruction sequences. This study was done in Refs.\cite{garcia1:09,garcia2:09} in the most usual setting where $\mathcal{H}=L^2(\mathbb{R})$ and $Tf(t)=f(t-1)$.

\section{The  $\mathbb{Z}$-infinite case with multiple generators}
\label{section3}
The case of $L$ generators can be analogously handled. Indeed, consider the subspace generated by $\mathbf{a}:=\{a_1, a_2, \dots, a_L\} \subset \mathcal{H}$, i.e., 
$\mathcal{A}_{\mathbf{a}}:= \overline{{\rm span}}\big\{T^na_l,\;  n\in \mathbb{Z};\, l=1, 2.\dots, L\big\}$.
Assuming that the sequence $\{T^na_l\}_{n\in \mathbb{Z};\,l=1, 2, \dots, L}$ is a Riesz sequence for
$\mathcal{H}$,  the subspace $\mathcal{A}_{\mathbf{a}}$ can be expressed as
\[
\mathcal{A}_{\mathbf{a}}=\Big\{\sum_{l=1}^L \sum_{n\in\mathbb{Z}}\alpha_n^l\,T^na_l:\, \{\alpha_n^l\}_{n\in \mathbb{Z}} \in \ell^2(\mathbb{Z});\, l=1, 2.\dots, L \Big\}\,.
\]
The subspace $\mathcal{A}_{\mathbf{a}}$ is the image of the usual product Hilbert space $L_L^2(0, 1)$ by means of the isomorphism
\[
\begin{array}[c]{ccll}
\mathcal{T}_{T,\mathbf{a}}: & L_L^2(0, 1) & \longrightarrow & \mathcal{A}_\mathbf{a}\\
       & \mathbf{F}=\displaystyle{\sum_{l=1}^L\sum_{n\in \mathbb{Z}} \alpha_n^l\,{\rm e}^{2\pi {\rm i} nw}\mathbf{e}_l } & \longmapsto & x=\displaystyle{\sum_{l=1}^L\sum_{n\in \mathbb{Z}} \alpha_n^l\,T^na_l}\,,
\end{array}
\]
where $\{\mathbf{e}_l\}_{l=1}^L$ denotes the canonical basis for $\mathbb{C}^L$. This isomorphism $\mathcal{T}_{T,\mathbf{a}}$ satisfies, for each $\mathbf{F}=(F_1, F_2,\ldots,F_L)^\top$ in $L_L^2(0, 1)$, the {\em $T$-shifting property}
\begin{equation}
\label{shifting2}
\mathcal{T}_{T,\mathbf{a}}\big(\mathbf{F} {\rm e}^{2\pi {\rm i} mw} \big)=T^m \big(\mathcal{T}_{T,a} \mathbf{F}\big)\,,\quad m\in \ZZ\,.
\end{equation}

\subsubsection*{An expression for the samples}
For $x=\mathcal{T}_{T,\mathbf{a}} \mathbf{F} \in \mathcal{A}_{\mathbf{a}}$ we consider its samples $\{\mathcal{L}_jx(rm)\}_{m\in \mathbb{Z};\,j=1,2,\dots, s}$ given by \eqref{samples1}. In this case, assuming that $x=\sum_{l=1}^L\sum_{n\in \mathbb{Z}} \alpha_n^l\,T^na_l$ we have 
\[
\mathcal{L}_jx(rm)=\big \langle \sum_{l=1}^L\sum_{n\in \mathbb{Z}} \alpha_n^l\,T^na_l, (T^*)^{-rm}b_j\big\rangle_\Hc=\sum_{l=1}^L \big\langle F_l, g_j^l(w)\,{\rm e}^{2\pi {\rm i} rmw} \big\rangle_{L^2(0, 1)}\,,
\]
where $\displaystyle{F_l(w)=\sum_{n\in \mathbb{Z}} \alpha_n^l\, {\rm e}^{2\pi {\rm i} nw}}$ and $\displaystyle{g_j^l(w)=\sum_{k\in \mathbb{Z}} \langle (T^*)^{k}b_j, a_l\rangle_\mathcal{H}\, {\rm e}^{2\pi {\rm i} kw}}$ for $l=1, 2, \dots,L$ and $j=1, 2, \dots,s$. In other words, we have obtained the expression for the samples
\begin{equation}
\label{samples2bis}
\mathcal{L}_jx(rm)=\big\langle \mathbf{F}, \mathbf{g}_j(w)\,{\rm e}^{2\pi {\rm i} rmw} \big\rangle_{L_L^2(0, 1)}\,,
\end{equation}
where  the functions $\mathbf{F}=(F_1, F_2, \dots, F_L)^\top$  and
$\mathbf{g}_j(w)=(g_j^1(w), g_j^2(w), \dots,g_j^L(w))^\top$ belong to $L_L^2(0, 1)$ for $j=1, 2, \dots,s$. 

As in the one-generator case, the sequence $\big\{\mathbf{g}_j(w)\,{\rm e}^{2\pi {\rm i} rmw} \big\}_{m\in \mathbb{Z};\, j=1,2,\dots, s}$ should be a frame for $L_L^2(0, 1)$. Now, $\mathbb{G}$ turns out to be an $s\times rL$ matrix of  functions in $L^2(0, 1)$, namely
\[
\mathbb{G}(w)=\bigg(\mathbf{g}_j^\top\Big(w+\frac{k-1}{r}\Big)\bigg)_{\substack{j=1,2,\ldots,s \\ k=1,2,\ldots, r}}\,.
\]
Its corresponding related constants $\alpha_{\mathbb{G}}$ and $\beta_{\mathbb{G}}$ must also verify the necessary and sufficient condition $0<\alpha_{\mathbb{G}}\le \beta_{\mathbb{G}}<\infty$ (see \cite[Lemma 2]{garcia:08}).

Its dual frames having the same structure are $\big\{r\overline{\mathbf{h}_{j}(w)}\,{\rm e}^{2\pi {\rm i} rmw}\big\}_{m\in\mathbb{Z};\,j=1,2,\ldots,s}$, where the functions 
$\mathbf{h}_{j}$ form an $L\times s$ matrix 
$\mathbf{h}(w):=\big(\mathbf{h}_{1}(w), \mathbf{h}_{2}(w), \dots, \mathbf{h}_{s}(w)\big)$ with entries in $L^\infty(0, 1)$, and satisfying
\[
\Big(\mathbf{h}_{1}(w), \mathbf{h}_{2}(w), \dots, \mathbf{h}_{s}(w)\Big)\, \mathbb{G}(w)=\big( \mathbb{I}_L, \mathbb{O}_{L\times(r-1)L}\big)\quad \text{a.e. in $(0, 1)$}\,.
\]
In other words, any dual frame of  
$\big\{\mathbf{g}_{j}(w)\,{\rm e}^{2\pi {\rm i} rmw}\big\}_{m\in\mathbb{Z};\,j=1,2,\ldots,s}$ with its same structure is obtained by taking the first $L$ rows of the 
$rL\times s$ matrices given by
\[
\mathbb{H}(w):=\mathbb{G}^\dag(w)+\mathbb{U}(w)\big[\mathbb{I}_s-\mathbb{G}(w)\mathbb{G}^\dag(w)\big]\,,
\]
where $\mathbb{U}(w)$ denotes any $rL\times s$ matrix with entries in $L^\infty(0, 1)$. See \cite{garcia:08} for the details.

\subsubsection*{A regular sampling formula in $\Ac_\mathbf{a}$}

Since any $\mathbf{F} \in L_L^2(0, 1)$ can be expanded as
\[
\mathbf{F}=\sum_{j=1}^s \sum_{m\in \mathbb{Z}} \big\langle \mathbf{F}, \mathbf{g}_j(w)\,{\rm e}^{2\pi {\rm i}rmw} \big\rangle_{L_L^2(0, 1)}\, r\overline{\mathbf{h}_{j}(w)}\,{\rm e}^{2\pi {\rm i} rmw}\quad \text{in $L_L^2(0, 1)$}\,,
\]
having in mind \eqref{samples2bis}, the isomorphism $\mathcal{T}_{T,\mathbf{a}}$ and the $T$-shifting property \eqref{shifting2}, for each $x=\mathcal{T}_{T,\mathbf{a}} \mathbf{F}\in \mathcal{A}_{\mathbf{a}}$ give the sampling expansion
\begin{equation}
\label{sampling1bis}
x=\sum_{j=1}^s \sum_{m\in \mathbb{Z}} \mathcal{L}_j x(rm)\, T^{rm}\big[\mathcal{T}_{T,\mathbf{a}}(r\mathbf{h}_j) \big]
=\sum_{j=1}^s \sum_{m\in \mathbb{Z}} \mathcal{L}_j x(rm)\, T^{rm} c_{j,\mathbf{h}}\quad \text{in $\mathcal{H}$}\,,
\end{equation}
where $c_{j,\mathbf{h}}=\mathcal{T}_{T,\mathbf{a}}(r\mathbf{h}_j)\in \mathcal{A}_{\mathbf{a}}$, $j=1, 2, \dots, s$. The sequence 
$\{T^{rm} c_{j,\mathbf{h}}\}_{m\in\mathbb{Z};\,j=1,2,\ldots,s}$ is a frame for $\mathcal{A}_{\mathbf{a}}$. 

A similar characterization of the sampling formulas \eqref{sampling1bis}, analogous to that in Theorem \ref{regular1}, can be stated for this multiple generators setting.
\subsubsection*{Notes and remarks}
Next we include some specific comments on this section:
\begin{enumerate}
\item The use of several generators enriches the subspace $\mathcal{A}_{\mathbf{a}}$. Thus, multiply generators in the shift-invariant case leads to the multiwavelet setting. Multiwavelets lead to multiresolution analyses and fast algorithms just as scalar wavelets, but they have some advantages: they can have short support coupled with high smoothness and high approximation order, and they can be both symmetric and orthogonal (see, for instance, Ref. \cite{keinert:04}). Classical sampling in multiwavelet subspaces has been studied in Refs. \cite{garcia:08,selesnick:99,sun:99}. An example of the formula \eqref{sampling1bis} in the shift-invariant subspace of $L^2(\RR)$ generated by the Hermite  cubic splines can be found in \cite{garcia:08}.

\item In case the operator $T=U$ is unitary, the $L\times L$ (covariance) matrix formed with the cross-correlation $\langle U^ka_m, a_n \rangle _\mathcal{H}$, \,$k\in \mathbb{Z}$\,, for $1\le m,n \le L$,
admits the spectral representation \cite{kolmogorov:41}:
\[
\Big(\langle U^ka_m, a_n \rangle _\mathcal{H} \Big)_{1\le m,n \le L}=\frac{1}{2\pi}\int_{-\pi}^\pi {\rm e}^{{\rm i}k\theta} d\boldsymbol{\mu}_{\mathbf{a}}(\theta)\,, \quad k\in \mathbb{Z}\,.
\]
The spectral measure $\boldsymbol{\mu}_{\mathbf{a}}$ is an $L\times L$ matrix; its entries are the spectral measures associated with the cross-correlation functions $\langle U^ka_m, a_n \rangle _\mathcal{H}$. It can be decomposed into an absolute continuous part and its singular part: Thus we can write $d\boldsymbol{\mu}_{\mathbf{a}}(\theta)=\boldsymbol{\Phi}_{\mathbf{a}}(\theta)d\theta+ d\boldsymbol{\mu}^{s}_{\mathbf{a}}(\theta)$. The following result holds \cite[Theorem 5.1]{hector:14}:
The sequence $\{U^na_l\}_{n\in \mathbb{Z};\,l=1, 2, \dots, L}$ is a Riesz basis for $\mathcal{A}_{\mathbf{a}}$ if and only if the singular part $\boldsymbol{\mu}_{\mathbf{a}}^{s}\equiv 0$ and 
\begin{equation*}\label{condspecmeasvariosgen}
0< \einf_{\theta \in (-\pi, \pi)}\lambda_{\min}\big[{\boldsymbol{\Phi}_{\mathbf{a}}(\theta)}\big]\leq \esup_{\theta \in (-\pi, \pi)}\lambda_{\max}\big[{\boldsymbol{\Phi}_{\mathbf{a}}(\theta)}\big]<\infty \,.
\end{equation*}

In particular, whenever $U$ is the shift operator $f(u) \rightarrow f(u-1)$ in $L^2(\RR)$, the above condition is nothing but a condition involving  the {\em Gramian} matrix-function $G_{\mathbf{\Phi}}(w)$ of the generators $\Phi:=(\varphi_1,\varphi_2,\ldots,\varphi_L)^\top$ defined as
\[ 
G_{\mathbf{\Phi}}(w):=\sum_{n\in\ZZ} \widehat{\Phi}(w+n)\overline{\widehat{\Phi}(w+n)}^\top\,.
\] 
Namely, the sequence $\{\varphi_k(\cdot -n)\}_{n \in \ZZ,k=1,2\ldots,L}$ is a Riesz basis for $\Ac_{\mathbf{\Phi}}$ if and only if there exist two positive constants $m$ and $M$ such that 
$m\mathbb{I}_L  \le  G_{\mathbf{\Phi}}(w) \le M\mathbb{I}_L$ a.e. in $(0,1)$ (see, for instance, Ref.\cite{aldroubi:97}).

\item In the {\em overcomplete} setting we have that $s>rL$. Whenever $s=rL$ we are in the Riesz basis setting: There exist $s$ unique elements $c_j\in \Ac_{\mathbf{a}}$, $j=1, 2, \dots, s$, such that the sequence $\big\{T^{rm}c_j\big\}_{m\in \ZZ;\, j=1,2,\ldots, s}$ is a {\em Riesz basis} for $\mathcal{A}_{\mathbf{a}}$, and the sampling expansion \eqref{sampling1bis} holds. Moreover, due to the uniqueness of the coefficients following a Riesz basis expansion, the {\em interpolation property} 
$\mathcal{L}_{j'} c_j(rm)=\delta_{j,j'}\, \delta_{n,0}$, where $m\in \ZZ$ and $j, j'=1,2,\dots, s$, holds. 
\end{enumerate}

\section{The  cyclic case with a single generator}
\label{section4}
For a fixed $a\in \mathcal{H}$, assume that there exists a nonnegative integer $N$ such that $T^Na=a$; let $N$ be the smallest index with this property. Next, we consider the finite dimensional subspace $\mathcal{A}_a:= \espan \big\{a, Ta, T^2a,\dots,T^{N-1}a\big\}$ in $\Hc$. Assuming that this set of vectors is linearly independent in $\Hc$ we have the $N$-dimensional subspace of $\Hc$
\[
\mathcal{A}_a=\Big\{\sum_{k=0}^{N-1} \alpha(k)\,T^ka \,:\, (\alpha(0), \alpha(1), \dots, \alpha(N-1))^\top\in \CC^N \Big\}\,,
\]
and the isomorphism $\mathcal{T}_{N,a}$ between $\CC^N$ and $\Ac_a$
\[
\begin{array}[c]{ccll}
\mathcal{T}_{N,a}: & \CC^N & \longrightarrow & \mathcal{A}_a\\
       & \boldsymbol{\alpha}=\displaystyle{\sum_{k=0}^{N-1} \alpha(k)\,\mathbf{e_k}} & \longmapsto & x=\displaystyle{\sum_{k=0}^{N-1} \alpha(k)\,T^ka}\,,
\end{array}
\]
where $\big\{\mathbf{e_0}, \mathbf{e_1}, \dots, \mathbf{e_{N-1}} \big\}$ denotes the canonical basis for $\CC^N$.

Let $\{\alpha(k)\}_{k\in \ZZ}$ be an $N$-periodic sequence in $\CC$. For $1\le m\le N-1$ consider the vectors in $\CC^N$
\[
\begin{split}
\boldsymbol{\alpha}_0&:=\big(\alpha(0), \alpha(1),\dots, \alpha(N-1) \big)^\top\quad\text{and}\\
 \boldsymbol{\alpha}_{N-m}&:=\big(\alpha(N-m), \alpha(N-m+1),\dots, \alpha(N-m+N-1) \big)^\top\,.
 \end{split}
\] 
Then, the following {\em $T$-shifting property} holds (its proof is analogous to that in \cite[Proposition 2]{garcia:15})
\begin{equation}
\label{shifting3}
\mathcal{T}_{N,a}(\boldsymbol{\alpha}_{N-m})= T^m \big(\mathcal{T}_{N,a}(\boldsymbol{\alpha}_0)\big)\quad \text{for any\,\, $1\le m\le N-1$}\,.
\end{equation}

\subsubsection*{An expression for the samples}
Let $r$ be a positive integer such that $r|N$, and define $\ell:=N/r$. Fixed $s$ elements $b_j\in \Hc$, $j=1,2,\dots,s$, for each $x\in \Ac_a$ we consider its generalized samples  defined by
\begin{equation}
\label{samples3}
\Lc_jx(rn):=\langle x, (T^*)^{-rn}b_j \rangle_{\Hc}\,, \quad \text{$n=0,1,\ldots,\ell-1$ and $j=1,2,\ldots, s$}\,.
\end{equation}
For $x=\sum_{k=0}^{N-1} \alpha(k)\,T^ka$ we obtain a more convenient expression for its samples
\begin{equation}
\label{samples4}
\begin{split}
\Lc_jx(rn) &= \big\langle \sum_{k=0}^{N-1} \alpha(k)\, T^ka, (T^*)^{-rn}b_j \big\rangle_{\Hc} 
=\sum_{k=0}^{N-1} \alpha(k)\,\langle T^ka, (T^*)^{-rn}b_j \rangle_{\Hc} \\
&=\Big\langle \sum_{k=0}^{N-1} \alpha(k) \,\mathbf{e_k}, \sum_{k=0}^{N-1} \overline{\langle T^{k-rn}a, b_j \rangle}_{\Hc}\,\mathbf{e_k} \Big\rangle_{\CC^N}= \big\langle \boldsymbol{\alpha}, \mathbf{g}_{j,n} \big\rangle_{\CC^N}\,,
\end{split}
\end{equation}
where $\mathbf{g}_{j,n}=\sum_{k=0}^{N-1} \overline{\langle T^{k-rn}a, b_j \rangle}_{\Hc}\,\mathbf{e_k}$. In terms of  the $N$-periodic sequence in $\CC$ defined as
\begin{equation}
\label{r}
r_{a,b_j}(k):=\langle T^ka, b_j\rangle_\Hc\,, \,\,k\in \ZZ\,, 
\end{equation}
we can write
\begin{equation}
\label{g}
\mathbf{g}_{j,n}=\sum_{k=0}^{N-1} \overline{\langle T^{N+k-rn}a, b_j \rangle}_{\Hc}\,\mathbf{e_k}
=\sum_{k=0}^{N-1} \overline{r_{a,b_j}(N+k-rn)}\,\mathbf{e_k}\,.
\end{equation}
Having in mind the expression \eqref{samples4} for the samples $\big\{\Lc_jx(rn)\big\}_{\substack{ j=1,2,\ldots,s\\n=0,1,\ldots,\ell-1 }}$, and the isomorphism 
$\mathcal{T}_{N,a}$, any $x\in\Ac_a$ can be recovered from its samples if and only if the set of vectors 
$\big\{ \mathbf{g}_{j,n}\big\}_{\substack{ j=1,2,\ldots,s\\n=0,1,\ldots,\ell-1 }}$ in $\CC^N$ forms a spanning set for $\CC^N$, i.e., a frame for $\CC^N$ (see, for instance, Refs. \cite{casazza:14,ole:16}). This is equivalent to the condition $\text{rank\,}\mathbb{G}_{a,\mathbf{b}}=N$, where $\mathbb{G}_{a,\mathbf{b}}$ denotes the $N\times s\ell$ matrix whose columns are precisely the vectors $\big\{ \mathbf{g}_{j,n}\big\}_{\substack{ j=1,2,\ldots,s\\n=0,1,\ldots,\ell-1 }}$ written as
\[
\mathbb{G}_{a,\mathbf{b}}:=\begin{pmatrix}
\mathbf{g}_{1,0} & \hdots & \mathbf{g}_{1,\ell-1} & \mathbf{g}_{2,0} & \hdots & \mathbf{g}_{2,\ell-1} & \hdots & \mathbf{g}_{s,0}&\hdots &\mathbf{g}_{s,\ell-1}\\
\end{pmatrix}.
\]
In particular, we have that $N\le s\ell$, that is, $s\geq r$. Having in mind \eqref{g}, $N=r\ell$ and the $N$-periodic character of $r_{a,b_j}(k)$ we obtain that 
\begin{equation}
\label{matrixcc}
\mathbb{G}_{a,\mathbf{b}}=\begin{pmatrix}
\mathbb{R}_{a,b_1}^* & \mathbb{R}_{a,b_2}^* & \hdots & \mathbb{R}_{a,b_s}^* \\
\end{pmatrix}:=\mathbb{R}_{a,\mathbf{b}}^*\,,
\end{equation}
where each $\ell \times N$ block $\mathbb{R}_{a,b_j}$, $j=1,2,\dots,s$\,, is given by
\[
\mathbb{R}_{a,b_j}=\begin{pmatrix}
r_{a,b_j}(0) & r_{a,b_j}(1) & \hdots & r_{a,b_j}(N-1) \\
      r_{a,b_j}(N-r) & r_{a,b_j}(N-r+1)& \hdots & r_{a,b_j}(2N-r-1) \\
           \vdots & \vdots& \ddots & \vdots \\
                r_{a,b_j}(r) & r_{a,b_j}(r+1)& \hdots & r_{a,b_j}(r+N-1) \\
\end{pmatrix}.
\]
For $j=1,2,\dots,s$,  we have the following expression for the samples $\big\{\Lc_j x(rn)\big\}_{n=0}^{\ell-1}$ of $x=\sum_{k=0}^{N-1} \alpha(k) \, T^ka \in \Ac_a$
\[
\big(\Lc_j x(0), \Lc_j x(r), \cdots,  \Lc_j x(r(\ell-1)) \big)^\top=\mathbb{R}_{a,b_j}\, \big(\alpha(0), \alpha(1), \ldots, \alpha(N-1) \big)^\top.
\]
In other words, denoting the vectors $\boldsymbol{\alpha}:=\big(\alpha(0), \alpha(1), \dots, \alpha(N-1) \big)^\top \in \CC^N$ and
\begin{equation*}
\label{L}
\boldsymbol{\Lc}_{\text{sam}}x:=\big(\Lc_1 x(0), \Lc_1 x(r), \dots, \Lc_1 x(r(\ell-1)), \dots, \Lc_s x(0), \dots, \Lc_s x(r(\ell-1)) \big)^\top \in \CC^{s\ell}\,,
\end{equation*}
the matrix relationship  $\boldsymbol{\Lc}_{\text{sam}}x=\mathbb{R}_{a,\mathbf{b}}\,\boldsymbol{\alpha}$ holds where $\mathbb{R}_{a,\mathbf{b}}$ is the $s\ell \times N$ matrix  deduced from  \eqref{matrixcc}.

As $\text{rank\,}\mathbb{R}_{a,\mathbf{b}}=\text{rank\,}\mathbb{G}_{a,\mathbf{b}}=N$, the {\em Moore-Penrose pseudo-inverse} of $\mathbb{R}_{a,\mathbf{b}}$ is the $N\times s\ell$ matrix $\mathbb{R}^\dag_{a,\mathbf{b}}=\big[\mathbb{R}_{a,\mathbf{b}}^*\,\mathbb{R}_{a,\mathbf{b}} \big]^{-1}\mathbb{R}_{a,\mathbf{b}}^*$. Any dual frame of  $\big\{ \mathbf{g}_{j,n}\big\}_{\substack{j=1,2,\ldots,s \\ n=0,1,\ldots,\ell-1}}$ in $\CC^N$ is given by the columns of any left-inverse $\mathbb{H}$ of the matrix $\mathbb{R}_{a,\mathbf{b}}$; i.e., $\mathbb{H}\,\mathbb{R}_{a,\mathbf{b}}=\mathbb{I}_N$. All these matrices are expressed as 
\begin{equation}
\label{left}
\mathbb{H}=\mathbb{R}_{a,\mathbf{b}}^\dag+\mathbb{U}\big[\mathbb{I}_{s\ell}-\mathbb{R}_{a,\mathbf{b}}\,\mathbb{R}_{a,\mathbf{b}}^\dag\big]\,,
\end{equation}
where $\mathbb{U}$ denotes any arbitrary $N\times s\ell$ matrix. Let $\mathbb{H}$ be any {\em left-inverse} of $\mathbb{R}_{a,\mathbf{b}}$, and denote $\mathbf{h}_{j,n}$ its $(j-1)\ell+n+1$ column where $j=1,2,\ldots,s$ and $n=0,1,\ldots,\ell-1$; thus, $\big\{ \mathbf{h}_{j,n}\big\}_{\substack{j=1,2,\ldots,s \\ n=0,1,\ldots,\ell-1}}$ is a dual frame of $\big\{ \mathbf{g}_{j,n}\big\}_{\substack{j=1,2,\ldots,s \\ n=0,1,\ldots,\ell-1}}$.
Given any  $x=\sum_{k=0}^{N-1} \alpha(k) \, T^ka$ in $\Ac_a$,  from the matrix relationship $\boldsymbol{\Lc}_{\text{sam}}x=\mathbb{R}_{a,\mathbf{b}}\,\boldsymbol{\alpha}$ for the corresponding $\boldsymbol{\alpha}=\sum_{k=0}^{N-1} \alpha(k) \,\mathbf{e_k} \in \CC^N$ we obtain
\begin{equation*}
\label{F}
\boldsymbol{\alpha}=\big(\alpha_0, \alpha_1, \dots, \alpha_{N-1} \big)^\top=\mathbb{H}\,\boldsymbol{\Lc}_{\text{sam}}x=\sum_{j=1}^s \sum_{n=0}^{\ell-1}\Lc_jx(rn)\,\mathbf{h}_{j,n}\,.
\end{equation*}
Applying the isomorphism $\mathcal{T}_{N,a}$, for any $x=\sum_{k=0}^{N-1} \alpha(k) \, T^ka \in \Ac_a$ we get
\begin{equation}
\label{sampling2}
x=\mathcal{T}_{N,a}\big(\boldsymbol{\alpha}\big)=\sum_{j=1}^s\sum_{n=0}^{\ell-1}\Lc_jx(rn)\,\mathcal{T}_{N,a}\big(\mathbf{h}_{j,n}\big)\,.
\end{equation}
The column $\mathbf{h}_{j,n}$ in the above formula do not have, in principle,  any suitable structure for applying the $T$-shifting property \eqref{shifting3}. Although we will see that the columns of the Moore-Penrose pseudo-inverse $\mathbb{R}^\dag_{a,\mathbf{b}}$ fulfil the required attribute, we  will construct all the left-inverses of $\mathbb{R}_{a,\mathbf{b}}$ allowing it.
\subsubsection*{A regular sampling formula in $\Ac_a$}
Note that each $\ell\times N$ block $\mathbb{R}_{a,b_j}$ has an $r$-circulant character in the sense that each row of $\mathbb{R}_{a,b_j}$ is the previous row moved to the right $r$ places and wrapped around. In general, and in terms of  a matrix $C$  of order $s\ell\times N$ partitioned into $s$ submatrices of order $\ell\times N$, each block has an $r$-circulant character if and only if 
$ C P_N^r=   \mathbb{P} C
$,  or equivalently, 
$$ C= \mathbb{P}^* C P_N^r $$
where $P_i$ denotes the $1$-circulant square matrix of order $i\in \mathbb{N}$ with first row $(0, 1, 0, \cdots, 0)$ and $ \mathbb{P}$ is the square matrix of order $ s\ell$ given by  $\mathbb{P}= \text{diag}(P_\ell, \cdots, P_\ell)$,  the direct sum of $s$ times the matrix $P_\ell$. The above characterization allows to conclude easily that $(C^\dag)^*$ inherits, and consequently $(C^\dag)^\top$, the $r$-circulant character  
from  $C$. Indeed 
$$ (C^\dag)^*=  ((\mathbb{P}^* C P_N^r)^\dag)^* =\big((P_N^r)^* C^{\dag}\mathbb{P}\big)^*=\mathbb{P}^* (C^\dag)^* P_N^r \, .$$
For more details on pseudoinverses of circulant matrices see Refs.~\cite{pye:73,stallings:72}. In these sources are to be found the  above results although  for a square matrix $C$.

\medskip

We now proceed to construct a specific left-inverse $\mathbb{H}_{\mathbb{S}}$ of $\mathbb{R}_{a,\mathbf{b}}$ from any left-inverse $\mathbb{H}$ given by \eqref{left} in the following way: We denote as $\mathbb{S}$ the first $r$ rows of the matrix $\mathbb{H}$, i.e., $\mathbb{S}\,\mathbb{R}_{a,\mathbf{b}}=\big[\mathbb{I}_r, \mathbb{O}_{r\times (N-r)}\big]$, where $\mathbb{I}_r$ and $\mathbb{O}_{r\times (N-r)}$ denote, respectively, the identity matrix of order $r$ and the zero matrix of order $r\times (N-r)$. According to the structure of the matrix $\mathbb{R}_{a,\mathbf{b}}$ (see \eqref{matrixcc}), we partition the $r\times s\ell$ matrix $\mathbb{S}$ into $\big(\mathbb{S}_1 \,\, \mathbb{S}_2 \,\, \dots \,\,\mathbb{S}_s  \big)$, where each $\mathbb{S}_j$, $j=1,2,\dots,s$, is an $r\times \ell$ block. Now, we form the $N\times s\ell$ matrix $\mathbb{H}_{\mathbb{S}}:=\big( \mathbb{\widetilde{S}}_1\,\, \mathbb{\widetilde{S}}_2\,\, \dots \,\, \mathbb{\widetilde{S}}_s\big)$ by using the columns of 
$\mathbb{S}_j$, $j=1,2,\dots, s$ in the following manner:\\
\noindent $\bullet$ The first column of $\mathbb{\widetilde{S}}_j$ is a concatenation of the columns $1$, $\ell$, $\ell-1$, $\dots$, and  $2$ of $\mathbb{S}_j$; \\
\noindent $\bullet$ The second column of $\mathbb{\widetilde{S}}_j$ is a concatenation of the columns $2$, $1$, $\ell$, $\dots$, and $3$ of $\mathbb{S}_j$; \\
\noindent $\bullet$ The third column of $\mathbb{\widetilde{S}}_j$ is a concatenation of the columns $3$, $2$, $1$, $\dots$, and $4$ of $\mathbb{S}_j$; \\
Repeating the process, finally, \\
\noindent $\bullet$  The column $\ell$ of $\mathbb{\widetilde{S}}_j$ is a concatenation of the columns $\ell$, $\ell-1$, $\ell-2$, $\dots$, and $1$ of $\mathbb{S}_j$. 

The elements of each column of $\mathbb{\widetilde{S}}_j$ are identical to the previous column of $\mathbb{S}_j$ but are moved $r$ positions down with wraparound.

With this procedure, we obtain a left-inverse matrix $\mathbb{H}_{\mathbb{S}}$ for $\mathbb{R}_{a,\mathbf{b}}$, i.e., 
$\mathbb{H}_{\mathbb{S}}\, \mathbb{R}_{a,\mathbf{b}}=\mathbb{I}_N$ (see the proof in \cite[Lemma 2]{garcia:15}). Next we denote the columns of $\mathbb{H}_{\mathbb{S}}$ as
\[
\mathbb{H}_{\mathbb{S}}=\begin{pmatrix}
\mathbf{h}_{1,0} & \hdots & \mathbf{h}_{1,\ell-1} & \mathbf{h}_{2,0} & \hdots & \mathbf{h}_{2,\ell-1} & \hdots & \mathbf{h}_{s,0}&\hdots &\mathbf{h}_{s,\ell-1}\\
\end{pmatrix}
\]
Due to the structure of the columns of $\mathbb{H}_{\mathbb{S}}$, by using the $T$-shifting property \eqref{shifting3}, we have that $\mathcal{T}_{N,a}\big(\mathbf{h}_{j,n}\big)=T^{rn}\mathcal{T}_{N,a}\big(\mathbf{h}_{j,0}\big)$, \,$j=1,2,\ldots,s$ and $n=0,1,\ldots,\ell-1$. As a consequence, formula \eqref{sampling2} reads
\[
x=\sum_{j=1}^s\sum_{n=0}^{\ell-1}\Lc_jx(rn)\,T^{rn} c_{j,h} \,,
\]
where $c_{j,h}=\mathcal{T}_{N,a}\big(\mathbf{h}_{j,0}\big)\in \Ac_a$, $j=1,2, \dots,s$. In fact, the following result holds:

\begin{theorem}
\label{regular2}
Given the $s\ell \times N$ matrix  $\mathbb{R}_{a,\mathbf{b}}$ defined in \eqref{matrixcc}, the following statements are equivalents:
\begin{enumerate}[(a)]
\item $\text{rank\ $\mathbb{R}_{a,\mathbf{b}}$}=N$
\item There exists an $r\times s\ell$ matrix $\mathbb{S}$ such that
\begin{equation*}
\label{S}
\mathbb{S}\,\mathbb{R}_{a,\mathbf{b}}=\big(\mathbb{I}_r, \mathbb{O}_{r\times (N-r)}\big)\,,
\end{equation*}
where $\mathbb{I}_r$ and $\mathbb{O}_{r\times (N-r)}$ denote, respectively, the identity matrix of order $r$ and the zero matrix of order $r\times (N-r)$.
\item There exist $c_j\in \Ac_a$, $j=1,2,\dots, s$ such that the sequence $\big\{T^{rn} c_j\big\}_{\substack{j=1,2,\ldots, s \\ n=0,1,\ldots,\ell-1}}$ is a frame for $\Ac_a$, and for any $x\in \Ac_a$ the expansion
\begin{equation}
\label{sampling2bis}
x=\sum_{j=1}^s \sum_{n=0}^{\ell-1} \Lc_j x(rn)\, T^{rn} c_j
\end{equation}
holds.
\item There exist a frame $\big\{ C_{j,n}\big\}_{\substack{j=1,2,\ldots, s \\ n=0,1,\ldots,\ell-1}}$ for $\Ac_a$ such that, for each $x\in \Ac_a$ the expansion
\[
x=\sum_{j=1}^s\sum_{n=0}^{\ell-1} \Lc_j x(rn)\, C_{j,n}
\]
holds.
\end{enumerate}
\end{theorem}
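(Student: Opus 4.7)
The plan is to establish the cyclic chain of implications $(a) \Rightarrow (b) \Rightarrow (c) \Rightarrow (d) \Rightarrow (a)$. Most of the technical work has already been laid out in the discussion preceding the theorem, so the proof proposal consists largely of assembling those pieces in the right order.

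First I would handle $(a) \Rightarrow (b)$: since $\mathbb{R}_{a,\mathbf{b}}$ has $N$ columns, the rank condition means it has full column rank, so the Moore-Penrose pseudo-inverse $\mathbb{R}_{a,\mathbf{b}}^\dag = \big[\mathbb{R}_{a,\mathbf{b}}^*\mathbb{R}_{a,\mathbf{b}}\big]^{-1}\mathbb{R}_{a,\mathbf{b}}^*$ is a well-defined left-inverse. More generally, every left-inverse $\mathbb{H}$ has the form \eqref{left}. Define $\mathbb{S}$ as the submatrix formed by the first $r$ rows of any such $\mathbb{H}$; then $\mathbb{S}\,\mathbb{R}_{a,\mathbf{b}}$ equals the first $r$ rows of $\mathbb{I}_N$, which is exactly $(\mathbb{I}_r,\mathbb{O}_{r\times(N-r)})$.

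Next, for $(b) \Rightarrow (c)$, I would invoke the explicit construction of $\mathbb{H}_{\mathbb{S}}$ from $\mathbb{S}$ described just above the theorem, whose left-inverse property $\mathbb{H}_{\mathbb{S}}\mathbb{R}_{a,\mathbf{b}} = \mathbb{I}_N$ follows from \cite[Lemma 2]{garcia:15}. Writing $\mathbb{H}_{\mathbb{S}}$ in columns $\mathbf{h}_{j,n}$, the key observation is that by construction the column $\mathbf{h}_{j,n}$ is obtained from $\mathbf{h}_{j,0}$ by the cyclic downshift of $r$ positions $n$ times, which is precisely the vector $(\mathbf{h}_{j,0})_{N-rn}$ in the notation preceding \eqref{shifting3}. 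Thus the $T$-shifting property \eqref{shifting3} gives $\mathcal{T}_{N,a}(\mathbf{h}_{j,n}) = T^{rn}\mathcal{T}_{N,a}(\mathbf{h}_{j,0}) = T^{rn}c_{j,h}$, where $c_{j,h} := \mathcal{T}_{N,a}(\mathbf{h}_{j,0}) \in \Ac_a$. Applying $\mathcal{T}_{N,a}$ to $\boldsymbol{\alpha} = \mathbb{H}_{\mathbb{S}}\,\boldsymbol{\Lc}_{\text{sam}}x$ yields the sampling formula \eqref{sampling2bis} with $c_j = c_{j,h}$. Finally, since every $x\in \Ac_a$ is a finite linear combination of the vectors $\{T^{rn}c_j\}$, this family spans $\Ac_a$; in the finite-dimensional setting a spanning set is a frame.

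The implication $(c) \Rightarrow (d)$ is immediate: take $C_{j,n} := T^{rn}c_j$. For $(d) \Rightarrow (a)$, suppose $\boldsymbol{\alpha} \in \CC^N$ satisfies $\mathbb{R}_{a,\mathbf{b}}\boldsymbol{\alpha} = 0$, and let $x = \mathcal{T}_{N,a}(\boldsymbol{\alpha}) \in \Ac_a$. Then $\boldsymbol{\Lc}_{\text{sam}}x = 0$, i.e., all samples $\Lc_j x(rn)$ vanish; the expansion in $(d)$ then forces $x = 0$, and since $\mathcal{T}_{N,a}$ is an isomorphism, $\boldsymbol{\alpha} = 0$. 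Hence $\mathbb{R}_{a,\mathbf{b}}$ has trivial kernel, so $\operatorname{rank}\mathbb{R}_{a,\mathbf{b}} = N$. The main technical obstacle in this proof is the verification that $\mathbb{H}_{\mathbb{S}}$ really is a left-inverse with the advertised shifted-column structure (step $(b) \Rightarrow (c)$); since this is a bookkeeping argument essentially identical to the one in \cite[Lemma 2]{garcia:15}, I would simply cite that reference rather than redo the computation.
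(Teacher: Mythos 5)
Your proposal is correct and follows essentially the same route as the paper: the cyclic chain $(a)\Rightarrow(b)\Rightarrow(c)\Rightarrow(d)\Rightarrow(a)$, with the first three implications taken from the discussion preceding the theorem. The only difference is that where the paper defers the closing implication to Theorem 3.1 of \cite{garcia:15}, you supply the $(d)\Rightarrow(a)$ kernel argument explicitly (samples vanish $\Rightarrow$ $x=0$ $\Rightarrow$ $\boldsymbol{\alpha}=0$ via the isomorphism $\mathcal{T}_{N,a}$), and that argument is correct.
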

\begin{proof}
We have already proved that $(a)$ implies $(b)$, and that $(b)$ implies $(c)$. Obviously, $(c)$ implies $(d)$.
The proof concludes as that of Theorem 3.1 in \cite{garcia:15}.
\end{proof}
\subsubsection*{Notes and remarks}
Next we list some specific comments for this section:
\begin{enumerate}
\item The vectors $\big\{T^ka\big\}_{k=0}^{N-1}$ in $\Hc$ are linearly independent if and only if  the $N \times N$ {\em Gram matrix} $\Big( \langle T^la,T^ka\rangle \Big)_{0\leq k, l\leq N-1}$ has non zero determinant.

\item The pseudo-inverse $\mathbb{R}_{a,\mathbf{b}}^\dag$ is computed by using the singular value decomposition of 
$\mathbb{R}_{a,\mathbf{b}}$; the singular values are the square root of the eigenvalues of  the $N\times N$ invertible and positive semidefinite matrix 
$\mathbb{R}_{a,\mathbf{b}}^*\,\mathbb{R}_{a,\mathbf{b}}$ (see, for instance,  \cite{ole:16,horn:99}). Note that the singular value decomposition of $\mathbb{R}_{a,\mathbf{b}}$ is the most reliable method to reveal its rank in practice. 

Besides, the optimal frame bound of the frame $\big\{ \mathbf{g}_{j,n}\big\}_{\substack{j=1,2,\ldots,s \\ n=0,1,\ldots,\ell-1}}$ for $\CC^N$ defined in \eqref{g} are $\sigma_1^2$ and $\sigma_N^2$ where $\sigma_1$ (respectively $\sigma_N$) denotes the smallest (respectively the largest) singular value of the matrix $\mathbb{R}_{a,\mathbf{b}}$.

\item In the {\em overcomplete} setting we have that  $s>r$. Whenever $r=s$, the $N\times N$ matrix $\mathbb{R}_{a,\mathbf{b}}$ is invertible and there exist $r$ unique elements $c_j\in \Ac_a$, $j=1, 2, \dots, r$, such that the sequence $\big\{T^{rn}c_j\big\}_{\substack{j=1,2,\ldots, r \\ n=0,1,\ldots,\ell-1}}$ is a basis for $\mathcal{A}_a$, and the sampling expansion \eqref{sampling2bis} holds. Notice that in this case the inverse matrix $\mathbb{R}_{a,\mathbf{b}}^{-1}$ has necessarily the structure of the matrix $\mathbb{H}_{\mathbb{S}}$. Moreover, due to the uniqueness of the coefficients in a basis expansion, the {\em interpolation property}  $\mathcal{L}_{j'} c_j(rn)=\delta_{j,j'}\, \delta_{n,0}$, where $n=0,1,\ldots,\ell-1$ and $j, j'=1,2,\dots, r$, holds (for a similar result, see \cite[Corollary 4]{garcia:15}).
 
\item The matrix $\mathbb{H}_{\mathbb{S}}$ constructed from the first $r$ rows of a left-inverse $\mathbb{H}$ of $\mathbb{R}_{a,\mathbf{b}}$  belongs to the family described by \eqref{left}; indeed, $\mathbb{H}_{\mathbb{S}}=\mathbb{R}_{a,\mathbf{b}}^\dag+\mathbb{U}_{\mathbb{S}}[\mathbb{I}_{s\ell}-\mathbb{R}_{a,\mathbf{b}}\,\mathbb{R}_{a,\mathbf{b}}^\dag]$ for $\mathbb{U}_{\mathbb{S}}=\mathbb{U}+\mathbb{H}_{\mathbb{S}}-\mathbb{H}$ where 
$\mathbb{U}$ is a matrix such that $\mathbb{H}=\mathbb{R}_{a,\mathbf{b}}^\dag+\mathbb{U}[\mathbb{I}_{s\ell}-\mathbb{R}_{a,\mathbf{b}}\,\mathbb{R}_{a,\mathbf{b}}^\dag]$.

\item An easy example involving  the cyclic shift in the Hilbert space $\ell_N^2(\ZZ)$ of $N$-periodic sequences of complex numbers can be found in \cite{garcia:15}.
\end{enumerate}
\section{The  cyclic case with multiple generators}
\label{section5}
We denote the set of generators as $\mathbf{a}:=\{a_1,a_2,\dots, a_L\}\subset \Hc$ and their respective orders as $N_1, N_2, \dots, N_L$, i.e., $T^{N_l}a_l=a_l$, $l=1,2,\dots,L$. Let $\mathcal{A}_\mathbf{a}$ be the subspace in $\Hc$ defined by
\[
\mathcal{A}_\mathbf{a}:= \espan \big\{a_l, Ta_l, T^2a_l,\dots,T^{N_l-1}a_l\big\}_{l=1}^L \,.
\]
Linear independence of the above vectors  gives an  $N_1+N_2+\dots+N_L$ dimensional subspace in $\Hc$ described as
\[
\Ac_{\mathbf{a}}=\Big\{ \sum_{l=1}^L\sum_{k=0}^{N_l-1}\alpha^l(k)\, T^k a_l \,\,:\,\,\alpha^l(k)\in \CC \Big\}\,.
\]
Next we consider  the isomorphism $\mathcal{T}_{\mathbf{N},\mathbf{a}}$ between $\CC^{N_1+N_2+\dots+N_L}$ and $\Ac_{\mathbf{a}}$
\[
\begin{array}[c]{cll}
\mathcal{T}_{\mathbf{N},\mathbf{a}}:  \CC^{N_1+N_2+\dots+N_L} & \longrightarrow & \mathcal{A}_{\mathbf{a}}\\
        \boldsymbol{\alpha}:=(\boldsymbol{\alpha}^1, \boldsymbol{\alpha}^2,\dots,\boldsymbol{\alpha}^L)^\top & \longmapsto & x=\displaystyle{\sum_{l=1}^L\sum_{k=0}^{N_l-1} \alpha^l(k)\,T^ka_l}\,,
\end{array}
\]
where $\boldsymbol{\alpha}^l:=\big(\alpha^l(k)\big)_{k=0}^{N_l-1}$ for $l=1, 2, \dots, L$.

Consider any vector 
$\boldsymbol{\alpha}_0\in  \CC^{N_1+N_2+\dots+N_L}$ denoted by $\boldsymbol{\alpha}_0=\big(\boldsymbol{\alpha}^1_0, \boldsymbol{\alpha}^2_0, \dots,\boldsymbol{\alpha}^L_0 \big)^\top$, where each block, denoted by $\boldsymbol{\alpha}^l_0=\big( \alpha^l(0), \alpha^l(1),\dots, \alpha^l(N_l-1) \big)$, \,$1\le l \le L$, is a row vector of dimension $N_l$. Set $N:={\rm l.c.m.}\,(N_1, N_2, \dots N_L)$ and $1\le m \le N-1$.  From  $\boldsymbol{\alpha}_0$ we define  a new vector  
$\boldsymbol{\alpha}_{N-m}:=\big(\boldsymbol{\alpha}^1_{N-m}, \boldsymbol{\alpha}^2_{N-m}, \dots,\boldsymbol{\alpha}^L_{N-m} \big)^\top$ in $\CC^{N_1+N_2+\dots+N_L}$, where each block $\boldsymbol{\alpha}^l_{N-m}=\big( \alpha^l(N-m), \alpha^l(N-m+1),\dots, \alpha^l(N-m+N_l-1) \big)$,
is obtained by assuming an $N_l$-periodic character in each $\alpha^l(\cdot)$, \,$l=1, 2, \dots, L$. Then  the following {\em T-shifting property} holds (its proof is analogous to that in \cite[Lemma 1]{garcia:16})
\begin{equation}
\label{shifting4}
\mathcal{T}_{\mathbf{N},\mathbf{a}}(\boldsymbol{\alpha}_{N-m})= T^m \big(\mathcal{T}_{\mathbf{N},\mathbf{a}}(\boldsymbol{\alpha}_0)\big)\quad \text{for any\,\, $1\le m\le N-1$}\,.
\end{equation}

\subsubsection*{An expression for the samples}
We consider a sampling period $r$ which divides $N={\rm l.c.m.}\,(N_1, N_2, \dots N_L)$ and $\ell:=N/r$. Fixed $s$ elements $b_j\in \Hc$, $j=1,2,\dots,s$, for each $x\in \Ac_{\mathbf{a}}$ we consider its $s\ell$ generalized samples $\big\{\Lc_jx(rn)\big\}_{\substack{j=1,2,\ldots, s \\ n=0,1,\ldots,\ell-1}}$ with sampling period $r$  defined by
\[
\Lc_jx(rn):=\langle x, (T^*)^{-rn}b_j \rangle_{\Hc}\,, \quad \text{$n=0,1,\ldots,\ell-1$ and $j=1,2,\ldots, s$}\,.
\]
Note that the samples are $N$-periodic. Proceeding as in the case of a single generator, for each $x=\sum_{l=1}^L\sum_{k=0}^{N_l-1} \alpha^l(k)\,T^ka_l$ we get a similar expression for its samples. Namely,
\begin{equation}
\label{samples5}
\Lc_jx(rn) =\sum_{l=1}^L\big\langle \boldsymbol{\alpha}^l, \mathbf{g}_{j,n}^l \big\rangle_{\CC^{N_l}}=\big\langle \boldsymbol{\alpha}, \mathbf{g}_{j,n} \big\rangle_{\CC^{N_1+N_2+\dots+N_L}}\,,
\end{equation}
where  $\mathbf{g}_{j,n}^l:=\Big(\overline{r_{a_l,b_j}(N+k-rn)}\Big)_{k=0}^{N_l-1}$  and
$\mathbf{g}_{j,n}:=(\mathbf{g}_{j,n}^1, \mathbf{g}_{j,n}^2, \dots, \mathbf{g}_{j,n}^L)^\top$, for $l=1, 2, \dots, L$ and $j=1,2,\dots,s$.

As a consequence of \eqref{samples5}, and having in mind the equivalence between spanning sets and frames in finite dimensional spaces, we obtain that any $\boldsymbol{\alpha} \in \CC^{N_1+N_2+\dots+N_L}$ (and consequently any $x\in \Ac_{\mathbf{a}}$) can be recovered from the sequence of samples if and only if the set of vectors 
$\big\{ \mathbf{g}_{j,n}\big\}_{\substack{ j=1,2,\ldots,s\\n=0,1,\ldots,\ell-1 }}$ forms a spanning set (frame) for $\CC^{N_1+N_2+\dots+N_L}$.

\medskip

As in the single generator case, expression \eqref{samples5} for the samples can be written in matrix form as
\begin{equation}
\label{matrixexp}
\boldsymbol{\Lc}_{\text{sam}}x= \mathbb{R}_{\mathbf{a},\mathbf{b}}\, \boldsymbol{\alpha}\,,
\end{equation}
where $\boldsymbol{\Lc}_{\text{sam}} x=\big(\boldsymbol{\Lc}_1 x, \boldsymbol{\Lc}_2 x, \dots, \boldsymbol{\Lc}_s x \big)^\top \in \CC^{s\ell}$, $\boldsymbol{\alpha}:=(\boldsymbol{\alpha}^1, \boldsymbol{\alpha}^2,\dots,\boldsymbol{\alpha}^L)^\top$ in $\CC^{N_1+N_2+\dots+N_L}$ is the vector associated to $x=\sum_{l=1}^L\sum_{k=0}^{N_l-1} \alpha^l(k)\,T^ka_l\in \Ac_{\mathbf{a}}$, and $\mathbb{R}_{\mathbf{a},\mathbf{b}}$ is the $s\ell \times(N_1+N_2+\dots+N_L)$ matrix of the form
\[
\mathbb{R}_{\mathbf{a},\mathbf{b}}=\Big( \mathbb{R}_{a_l, b_j}\Big)_{\substack{j=1,2,\ldots, s\\ l=1,2,\ldots,L }}
\]
where the block $\mathbb{R}_{a_l, b_j}$ is the $\ell\times N_l$ matrix 
{\small
\[
\mathbb{R}_{a_l,b_j}=
\begin{pmatrix}
r_{a_l,b_j}(0) & r_{a_l,b_j}(1) & \hdots & r_{a_l,b_j}(N_l-1) \\
      r_{a_l,b_j}(N-r) & r_{a_l,b_j}(N-r+1)& \hdots & r_{a_l,b_j}(N-r+N_l-1) \\
           \vdots & \vdots& \ddots & \vdots \\
                r_{a_l,b_j}(N-r(\ell-1)) & r_{a_l,b_j}(N-r(\ell-1)+1)& \hdots & r_{a_l,b_j}(N-r(\ell-1)+N_l-1) \\
\end{pmatrix}\,.
\]
}
In terms of the matrix $\mathbb{R}_{\mathbf{a},\mathbf{b}}$,  any element $x\in \Ac_{\mathbf{a}}$ can be recovered from the sequence of its  generalized samples $\big\{\Lc_jx(rn)\big\}_{\substack{j=1,2,\ldots, s\\ n=0,1,\ldots,\ell-1 }}$ if and only if the matrix $\mathbb{R}_{\mathbf{a},\mathbf{b}}$ has rank  $N_1+N_2+\dots+N_L$.

Following the same procedure that in  Section \ref{section4},  from \eqref{matrixexp} we get
\begin{equation}
\label{H}
\boldsymbol{\alpha}=\mathbb{H}\,\boldsymbol{\Lc}_{\text{sam}}x=\sum_{j=1}^s\sum_{n=0}^{\ell-1}\Lc_jx(rn)\,\mathbf{h}_{j,n}\,,
\end{equation}
where $\mathbf{h}_{j,n}$ denotes the $(j-1)\ell+n+1$ column of any left-inverse $\mathbb{H}$ of the matrix $\mathbb{R}_{\mathbf{a},\mathbf{b}}$. Applying the isomorphism $\mathcal{T}_{\mathbf{N},\mathbf{a}}$ in \eqref{H}, for any 
$x \in \Ac_{\mathbf{a}}$ we obtain
\begin{equation}
\label{noestructura}
x=\mathcal{T}_{\mathbf{N},\mathbf{a}}\big(\boldsymbol{\alpha}\big)=\sum_{j=1}^s\sum_{n=0}^{\ell-1}\Lc_jx(rn)\,\mathcal{T}_{\mathbf{N},\mathbf{a}}\big(\mathbf{h}_{j,n}\big)\,.
\end{equation}
The sampling functions $\mathcal{T}_{\mathbf{N},\mathbf{a}}\big(\mathbf{h}_{j,n}\big)$ in the above formula do not have, in principle, any special $T$-structure. As in Section \ref{section4}, we must find left-inverses of 
$\mathbb{R}_{\mathbf{a},\mathbf{b}}$ 
whose columns have a suitable structure allowing to apply the $T$-shifting property \eqref{shifting4}.
\subsubsection*{A regular sampling formula in $\Ac_\mathbf{a}$}
We proceed to  construct  left-inverses of $\mathbb{R}_{\mathbf{a},\mathbf{b}}$ having the required structure for applying property \eqref{shifting4} to their columns. Indeed, let $\mathbb{H}$ be any left-inverse of the matrix $\mathbb{R}_{\mathbf{a},\mathbf{b}}$. This  matrix $\mathbb{H}$ has order $(N_1+N_2+\dots+N_L)\times s\ell$ and it can be written in blocks as
\begin{equation*}
\mathbb{H}=
\begin{pmatrix}
\mathbb{S}_1^1&\mathbb{S}_2^1&\dots&\mathbb{S}_s^1 \\
\mathbb{S}_1^2&\mathbb{S}_2^2&\dots&\mathbb{S}_s^2 \\ 
\vdots &\vdots&\ddots&\vdots \\ 
\mathbb{S}_1^L&\mathbb{S}_2^L&\dots&\mathbb{S}_s^L 
\end{pmatrix}\,,
\end{equation*}
where each block $\mathbb{S}_j^l$ denotes an $N_l\times \ell$ matrix, $l=1,2,\dots,L$ and $j=1,2,\dots,s$. The idea consists in defining  a new left-inverse 
$\mathbb{\widetilde{H}}$ of $\mathbb{R}_{\mathbf{a},\mathbf{b}}$ to be the modification to $\mathbb{H}$  by replacing every block $\mathbb{S}_j^l$  such that $N_l > r$  by a suitable  $N_l\times \ell$ block $\mathbb{\widetilde{S}}_j^l$ proceeding, for $N$, as in Section \ref{section4}, and then taking the first $N_l$ rows.  In case $N_l \leq r$, we take $\mathbb{\widetilde{S}}_j^l := \mathbb{S}_j^l$. Finally, consider the new $(N_1+N_2+\dots+N_L)\times s\ell$ matrix $\mathbb{\widetilde{H}}$ written in blocks as
\begin{equation*}
\label{Htilde}
\mathbb{\widetilde{H}}=
\begin{pmatrix}
\mathbb{\widetilde{S}}_1^1&\mathbb{\widetilde{S}}_2^1&\dots&\mathbb{\widetilde{S}}_s^1 \\
\mathbb{\widetilde{S}}_1^2&\mathbb{\widetilde{S}}_2^2&\dots&\mathbb{\widetilde{S}}_s^2 \\ 
\vdots &\vdots&\ddots&\vdots \\ 
\mathbb{\widetilde{S}}_1^L&\mathbb{\widetilde{S}}_2^L&\dots&\mathbb{\widetilde{S}}_s^L 
\end{pmatrix}
\end{equation*}
With the above procedure we have obtained a left-inverse matrix $\mathbb{\widetilde{H}}$ for $\mathbb{R}_{\mathbf{a},\mathbf{b}}$ having the desired structure (see \cite[Lemma 2]{garcia:16} for a proof), i.e., for $j=1,2, \dots, s$ and 
$n=1, 2,\dots,\ell-1$ the column $\mathbf{\widetilde{h}}_{j,n}$ of $\mathbb{\widetilde{H}}$ is obtained from the column 
$\mathbf{\widetilde{h}}_{j,0}$ as follows: If we denote the column $\mathbf{\widetilde{h}}_{j,0}$ by 
\[
\mathbf{\widetilde{h}}_{j,0}=\big(\mathbf{\tilde{s}}^1_{j,0}\,, \mathbf{\tilde{s}}^2_{j,0}\,, \dots, \mathbf{\tilde{s}}^L_{j,0}\,\big)^\top \in \CC^{N_1+N_2+\dots+N_L}
\]
where, for each $l=1,2,\dots, L$, $\mathbf{\tilde{s}}^l_{j,0}$ denotes the row vector (of dimension $N_l$)
\[
\mathbf{\tilde{s}}^l_{j,0}=\big(\tilde{s}^l_j(0), \tilde{s}^l_j(1), \dots, \tilde{s}^l_j(N_l-1) \big)\,,
\]
then 
\[
\mathbf{\widetilde{h}}_{j,n}=\big(\mathbf{\tilde{s}}^1_{j,n}\,, \mathbf{\tilde{s}}^2_{j,n}\,, \dots, \mathbf{\tilde{s}}^L_{j,n}\,\big)^\top \in \CC^{N_1+N_2+\dots+N_L}
\]
where, for each $l=1,2,\dots, L$, $\mathbf{\tilde{s}}^l_{j,n}$ denotes the row vector (of dimension $N_l$)
\[
\mathbf{\tilde{s}}^l_{j,n}=\big(\tilde{s}^l_j(N-rn), \tilde{s}^l_j(N-rn+1), \dots, \tilde{s}^l_j(N-rn+N_l-1) \big)\,
\]
(we are also extending each $\tilde{s}^l_j(\cdot)$ taking into account $N_l$-periodicity). 

\medskip

\noindent Thus, property \eqref{shifting4} gives $\mathcal{T}_{\mathbf{N},\mathbf{a}}\big(\mathbf{\widetilde{h}}_{j,n}\big)=T^{rn} \big(\mathcal{T}_{\mathbf{N},\mathbf{a}}\big(\mathbf{\widetilde{h}}_{j,0} \big)\big)$ for $n=0,1,\dots, \ell-1$ and $j=1, 2,\dots,s$.  Consequently, for each $x\in \Ac_{\mathbf{a}}$ formula \eqref{noestructura} reads
\begin{equation}
\label{sampling2tri}
x=\sum_{j=1}^s\sum_{n=0}^{\ell-1}\Lc_jx(rn)\,T^{rn} \big(\mathcal{T}_{\mathbf{N},\mathbf{a}}\big(\mathbf{\widetilde{h}}_{j,0} \big)\big)=
\sum_{j=1}^s\sum_{n=0}^{\ell-1}\Lc_jx(rn)\,T^{rn} c_{j,h}\,,
\end{equation}
where $c_{j,h}=\mathcal{T}_{\mathbf{N},\mathbf{a}}\big(\mathbf{\widetilde{h}}_{j,0} \big)\in \Ac_{\mathbf{a}}$, $j=1,2,\dots,s$. A characterization of the sampling formulas \eqref{sampling2tri}, similar to that in Theorem \ref{regular2}, can be stated for this multiple generators setting (for a comparable result, see \cite[Theorem 3]{garcia:16}).
\subsubsection*{A filter-bank interpretation}
The obtained sampling formulas \eqref{sampling1}, \eqref{sampling1bis}, \eqref{sampling2} and \eqref{sampling2tri} can be implemented as {\em filter-banks}. Let us show it, for instance, in the case included in this section.
Assume that rank of $\mathbb{R}_{\mathbf{a},\mathbf{b}}$ equals $N_1+N_2+\dots+N_L$, and let $\mathbb{\widetilde{H}}$ be a structured left-inverse  of $\mathbb{R}_{\mathbf{a},\mathbf{b}}$ with columns $\mathbf{\widetilde{h}}_{j,n}$, $j=1,2,\dots, s$ and $n=0, 1, \dots , \ell-1$. In the corresponding sampling formula \eqref{sampling2tri} we have 
$c_{j,h}=\mathcal{T}_{N,a}\big(\mathbf{\widetilde{h}}_{j,0}\big)$, $j=1,2,\dots, s$;  denote the components of $\mathbf{\widetilde{h}}_{j,0}$ as the $N_1+N_2+\dots+N_L$ dimensional vector
\[
\mathbf{\widetilde{h}}_{j,0}=\big(\beta_j^1(0), \beta_j^1(1), \dots, \beta_j^1(N_1-1), \dots, \beta_j^L(0), \beta_j^L(1), \dots, \beta_j^L(N_L-1) \big)^\top\,.
\]
Substituting in \eqref{sampling2tri}, for $x\in \Ac_{\mathbf{a}}$ we get
\[
\begin{split}
x&=\sum_{j=1}^s \sum_{n=0}^{\ell-1} \Lc_j x(rn)\, T^{rn} \Big(\sum_{l=1}^L\sum_{k=0}^{N_l-1}\beta_j^l(k)\,T^ka_l\Big) \\
&=\sum_{j=1}^s \sum_{n=0}^{\ell-1} \Lc_j x(rn)\, \Big(\sum_{l=1}^L\sum_{k=0}^{N_l-1}\beta_j^l(m)\,T^{rn+k}a_l\Big).
\end{split}
\]
The change of index $m:=rn+k$ and assuming an $(N_1, N_2, \dots,N_L)$-periodic character of each $\mathbf{\widetilde{h}}_{j,0}$, $j=1,2,\dots, s$, gives
\[
\begin{split}
x &=\sum_{j=1}^s \sum_{n=0}^{\ell-1} \Lc_j x(rn)\, \Big(\sum_{l=1}^L\sum_{m=rk}^{rk+N_l-1}\beta_j^l(m-rn)\,T^{m}a_l\Big) \\
&=\sum_{j=1}^s \sum_{n=0}^{\ell-1} \Lc_j x(rn)\, \Big(\sum_{l=1}^L\sum_{m=0}^{N_l-1}\beta_j^i(m-rn)\,T^{m}a_l\Big) \\
&=\sum_{l=1}^L\sum_{m=0}^{N_l-1}\Big\{\sum_{j=1}^s \sum_{n=0}^{\ell-1}\Lc_j x(rn)\,\beta_j^l(m-rn)\Big\}T^ma_l\,.
\end{split}
\]
In other words, for each $x=\sum_{l=1}^L\sum_{m=0}^{N_l-1} \alpha^l(m)\, T^ma_l$ in $\Ac_{\mathbf{a}}$, the coefficients $\alpha^l(m)$, $m=0,1, \dots, N_l-1$ are, for each $l=1, 2,\dots, L$, the output of a filter-bank
\[
\alpha^l(m)=\sum_{j=1}^s \sum_{n=0}^{\ell-1}\Lc_j x(rn)\,\beta_j^l(m-rn)\,, \quad m=0,1, \dots, N_l-1\,,
\]
involving the input data $\big\{\Lc_j x(rn)\big\}_{\substack{j=1,2,\ldots, s \\ n=0,1,\ldots,\ell-1}}$ and the columns $\mathbf{\widetilde{h}}_{j,0}$, $j=1,2,\dots, s$, of 
the matrix $\mathbb{\widetilde{H}}$ as the impulse responses. 
\subsubsection*{Notes and remarks}
Some specific comments are in order:
\begin{enumerate}
\item  Multiply generated $\mathcal{A}_\mathbf{a}$ spaces involving the shift operator  are very natural in signal theory. There are examples where a single generator fails to describe the appropriate signal subspace: for instance to describe subspaces of periodic extensions of finite signals, several generators  are required (see \cite[Section IV]{garcia:16}).
\item The {\em Gram matrix} of the vectors $\big\{a_l, Ta_l, T^2a_l,\dots,T^{N_l-1}a_l\big\}_{l=1}^L$, which involves the $N_j \times N_i$ matrices  $\mathbf{C}_{a_i, a_j}:=\Big( \langle T^la_i,T^ka_j\rangle \Big)_{0\leq k\leq N_j-1;\;0\leq l\leq N_i-1}$, where $1\le i, j \le L$, gives a necessary and sufficient condition for their linear independence. Namely, they are linearly independent if and only if
\[
\det \begin{pmatrix}\mathbf{C}_{a_1, a_1}&\mathbf{C}_{a_2, a_1}&\dots&\mathbf{C}_{a_L, a_1} \\\mathbf{C}_{a_1, a_2}&\mathbf{C}_{a_2, a_2}&\dots&\mathbf{C}_{a_L, a_2} \\ \vdots &\vdots&\ddots&\vdots \\ \mathbf{C}_{a_1, a_L}&\mathbf{C}_{a_2, a_L}&\dots&\mathbf{C}_{a_L, a_L} \end{pmatrix}\neq 0\,.
\] 
\item In the {\em overcomplete} setting we have that $s\ell>N_1+N_2+\dots+N_L$. Whenever $s\ell=N_1+N_2+\dots+N_L$ we are in the basis setting whenever the matrix $\mathbb{R}_{\mathbf{a},\mathbf{b}}$ is invertible: There exist $s$ unique elements $c_j\in \Ac_\mathbf{a}$, $j=1, 2, \dots, s$, such that the sequence $\big\{T^{rn}c_j\big\}_{\substack{j=1,2,\ldots, s \\ n=0,1,\ldots,\ell-1}}$ is a basis for $\mathcal{A}_\mathbf{a}$, and the sampling expansion \eqref{sampling2tri} holds. Notice that in this case the inverse matrix $\mathbb{R}_{\mathbf{a},\mathbf{b}}^{-1}$ has necessarily the structure of the matrix $\mathbb{\widetilde{H}}$. Moreover, due to the uniqueness of the coefficients in a basis expansion, the {\em interpolation property} 
$\mathcal{L}_{j'} c_j(rn)=\delta_{j,j'}\, \delta_{n,0}$, where $n=0,1,\ldots,\ell-1$ and $j, j'=1,2,\dots, s$, holds (for a similar result, see \cite[Corollary 4]{garcia:16}).
\item Some easy examples involving the whole-point symmetry or the half-point symmetry extensions of  finite signals (see, for instance, \cite{strang:96}) can be found in \cite{garcia:16}. 
\end{enumerate}
\section{The general case associated with an LCA group $G$}
\label{section6}
Let $(G,+)$ be a second countable {\em locally compact abelian} (LCA) Hausdorff group. Let $M< H< G$ be countable (finite or countably infinite) {\em uniform lattices} in $G$. Recall that a  uniform lattice $K$ in $G$ is a discrete subgroup of $G$ such that the quotient group $G/K$ is compact (see, for instance, Ref.~\cite{cabrelli:10}). 
It is known that if $M< H$ are uniform lattices in $G$ then $H/M$ is a finite group (see \cite[Remark 2.2]{cabrelli:12}).

The dual group of the subgroup $H< G$, that is, the set of continuous characters on $H$ is denoted by $\widehat{H}$. Since $H$ is discrete,  its dual $\widehat{H}$ is compact. We assume that its Haar measure $m_{\widehat{H}}$ is normalized to $m_{\widehat{H}}(\widehat{H})=1$. The value of the character $\gamma\in\widehat{H}$ at the point $h\in H$ is denoted by $(h,\gamma)\in\mathbb{T}$.  With this Haar measure normalization the sequence $\{\chi_h\}_{h\in H}$ defined by
\[
\widehat{H}\ni \gamma \mapsto \chi_h(\gamma)=(h,\gamma)\in \mathbb{T}
\]
turns out to be an orthonormal basis for $L^2(\widehat{H})$ (see, for instance, \cite[Prop.\,4.3]{folland:95}). 

\medskip

Let $g\in G\mapsto \Pi(g)$ a {\em group representation} of $G$ on a complex separable Hilbert space $\mathcal{H}$; i.e., $\Pi$ is a mapping from $G$ into the space of bounded invertible operators on $\Hc$, satisfying that $\Pi(g+g')=\Pi(g)\Pi(g')$ for all $g, g'\in G$.

Therefore, the mapping $h\in H\mapsto \Pi(h)$ is a group representation of $H$ on $\mathcal{H}$. For a fixed $a\in\mathcal{H}$ let define the subspace in 
$\mathcal{H}$
\[
\mathcal{A}_a:=\overline{\operatorname{span}}\big\{\Pi(h)a\,:\, h\in H\big\}\subset \mathcal{H}\,.
\]
We assume that $\{\Pi(h)a\}_{h\in H}$ is a Riesz sequence in $\mathcal{H}$. Thus, the subspace $\mathcal{A}_a$ can be expressed as
\[
 \mathcal{A}_a=\Big\{  \sum_{h\in H}\alpha_h \Pi(h)a\, :\, \{\alpha_h\}_{h\in H}\in \ell^2(H)\Big\}\subset \mathcal{H}\,.
\]
As usual, $\{\alpha_h\}_{h\in H}\in \ell^2(H)$ means that $\sum_{h\in H} |\alpha_h|^2 <\infty$. The subspace 
$\mathcal{A}_a$ is the image of the Hilbert space $L^2(\widehat{H})$ by means of the isomorphism:
\[
\begin{array}[c]{ccll}
\mathcal{T}_{H,a}: & L^2(\widehat{H}) & \longrightarrow & \mathcal{A}_a\\
       & \displaystyle{F=\sum_{h\in H} \alpha_h \chi_h} & \longmapsto & \displaystyle{x=\sum_{h\in H} \alpha_h \Pi(h)a}
\end{array}
\]
This isomorphism $\mathcal{T}_{H,a}$ has the following {\em $\Pi$-shifting property} (its proof is analogous to that in \cite[Proposition 1]{garcia:17}):
\begin{equation}
\label{shifting5}
 \mathcal{T}_{H,a}(F\chi_k)=\Pi(k)(\mathcal{T}_{H,a} F)\quad \text{ for any $F\in L^2(\widehat{H})$ and $k\in H$}\,.
\end{equation}

\subsubsection*{An expression for the samples}
Suppose that $s$ vectors $b_j\in \Hc$, $j=1,2,\dots,s$, are given. For each $x\in \mathcal{A}_a$ we define the sequence of its samples taken at the subgroup $M$, as
\begin{equation}
\label{samples6}
   \mathcal{L}_jx(m)=\langle x, \Pi^*(-m)b_j\rangle_\mathcal{H},\, \text{ $m\in M$ and $j=1,2,\dots,s$}\,,
\end{equation}
where $\Pi^*(-m)$ denotes the adjoint operator of $\Pi(-m)$.
For each $x\in \mathcal{A}_a$, let $F$ be  the element in $L^2(\widehat{H})$ such that
$\mathcal{T}_{H,a}F=x$. An alternative expression for the sample $\mathcal{L}_jx(m)$,\, $j=1,2,\dots,s$ and $m\in M$ is
\[
 \mathcal{L}_jx(m)=\Big\langle \sum_{h\in H} \alpha_h \Pi(h)a, \Pi^*(-m)b_j\Big\rangle_\mathcal{H}
  =\sum_{h\in H} \alpha_h \overline{\big\langle \Pi^*(h-m)b_j, a\big\rangle}_\mathcal{H}\,.
\]
Therefore, for any fixed $m\in M$ we have
\[
\mathcal{L}_jx(m)=\Big\langle F, \sum_{h\in H} \langle \Pi^*(h-m)b_j, a\rangle_\mathcal{H}\, \chi_h\Big\rangle_{L^2(\widehat{H})}=
\Big\langle F,\Big(\sum_{k\in H} \langle \Pi^*(k)b_j, a\rangle_\mathcal{H} \,\chi_{k}\Big)\chi_m\Big\rangle_{L^2(\widehat{H})}\,,
\]
where $k=h-m$ runs over $H$. Hence, we obtain the expression
\begin{equation}
\label{samples7}
\mathcal{L}_jx(m)=\big\langle F,\overline{G}_j\,\chi_m\big\rangle_{L^2(\widehat{H})},\, \text{ $m\in M$ and $j=1,2,\dots,s$}\,,
\end{equation}
where the function $G_j\in L^2(\widehat{H})$ is given by $G_j=\sum_{k\in H} \mathcal{L}_ja(k)\,\chi_{-k}$, $j=1,2,\dots,s$.
As a consequence of expression \eqref{samples7}, the recovery of any $x\in\mathcal{A}_a$ depends on the frame property of the sequence
$\big\{\overline{G}_j\,\chi_m\big\}_{m\in M;\, j=1,2,\dots,s}$ in $L^2(\widehat{H})$. This study has been done in \cite[Proposition 2]{garcia:17}. 
In order to state the result we need to introduce some necessary preliminaries. The {\em annihilator} of $M$ in $\widehat{H}$ is the closed subgroup
\[
  M^\bot=\{\gamma\in \widehat{H} \,:\, (m,\gamma)=1 \text{ for all $m$ in $M$ }\}
\]
Since $M^\bot$ is isomorphic to $\widehat{H/M}$, and $H/M$ is finite, the annihilator $M^\bot$ is a finite subgroup of 
$\widehat{H}$. Let $r$ be the order of $M^\perp$ and set $M^\bot=\big\{\mu_0^\bot=0,\mu_1^\bot,\dots,\mu_{r-1}^\bot\big\}$.
It is known that there exists a measurable (Borel) {\em section $\Omega$} of $\widehat{H}/M^\bot$ (see the seminal Ref.~\cite{feldman:68}), i.e.,  a measurable set  $\Omega$ such that
\[
\widehat{H}= \bigcup_{n=0}^{r-1} (\mu^\bot_{n}+\Omega)\quad \text{and}\quad(\mu^\bot_{n}+\Omega) \, \cap \,(\mu^\bot_{n'}+\Omega)=\emptyset, \quad \text{for}\, \, n\neq n'\,.
\]
Notice that $m_{\widehat{H}}(\Omega)=1/r$.  Besides, the sequence $\{\chi_m\}_{m\in M}$ is an orthogonal basis for $L^2(\Omega)$.

\medskip

For $G_{j}\in L^2(\widehat{H})$, $j=1,2,\ldots,s$, we consider the associated $s\times r$ matrix given by
\begin{equation}
\label{Gmatrix}
\mathbb{G}(\xi):=\bigg(G_j\Big(\xi+\mu_k^\bot\Big)\bigg)_{\substack{j=1,2,\ldots,s \\ k=0,1,\ldots, r-1}}\,, \qquad \xi \in \Omega\,,
\end{equation}
and the related constants
\[
\alpha_{\mathbb{G}}:=\einf_{\xi \in \Omega}\lambda_{\min}[\mathbb{G}^*(\xi)\mathbb{G}(\xi)]\,; \quad
\beta_{\mathbb{G}}:=\esup_{\xi \in \Omega}\lambda_{\max}[\mathbb{G}^*(\xi)\mathbb{G}(\xi)]\,.
\]
Thus we have (\cite[Proposition 2]{garcia:17}):

{\em The sequence $\{\overline{G}_j\,\chi_m\}_{m\in M;\,j=1,2,\ldots,s}$  is a frame for $L^2(\widehat{H})$ if and only if $0<\alpha_{\mathbb{G}}\le
\beta_{\mathbb{G}}<\infty$. In this case, the optimal frame bounds are $\alpha_{\mathbb{G}}/r$ and  $\beta_{\mathbb{G}}/r$}.

We also need to characterize its dual frames having the same structure. This is done in \cite[Proposition 4]{garcia:17}: 

{\em Assume that the functions $H_j\in L^\infty(\widehat{H})$, $j=1, 2, \ldots,s$, satisfy
\[
\big(H_1(\xi), H_2(\xi), \ldots, H_s(\xi)\big)\,\mathbb{G}(\xi)=(1, 0, \dots, 0)\,,\quad \text{a.e.}\,\,\, \xi\in \widehat{H}\,.
\]
Then, the sequences  $\{\overline{G}_j\,\chi_m\}_{m\in M;\,j=1,2,\ldots,s}$  and $\{rH_j\,\chi_m\}_{m\in M;\,j=1,2,\ldots,s}$ form a pair of dual frames for $L^2(\widehat{H})$.}

\medskip

All the possible vectors $\big(H_1(\xi), H_2(\xi), \ldots, H_s(\xi)\big)$ satisfying the above condition, and with entries in $L^\infty(\widehat{H})$ are given by the first row of the $r\times s$ matrices
\[
\mathbb{H}(\xi):=\mathbb{G}^\dag(\xi)+\mathbb{U}(\xi)\big[\mathbb{I}_s-\mathbb{G}(\xi)\mathbb{G}^\dag(\xi)\big]\,
\]
where $\mathbb{G}^\dag(\xi)=\big[\mathbb{G}^*(\xi)\,\mathbb{G}(\xi)\big]^{-1}\,\mathbb{G}^*(\xi)$ denotes the {\em Moore-Penrose pseudo-inverse} of 
$\mathbb{G}(\xi)$, and $\mathbb{U}(\xi)$ denotes any $r\times s$ matrix with entries in $L^\infty(\widehat{H})$.
\subsubsection*{A regular sampling formula in $\Ac_a$}
For $x\in\Ac_a$ let $F\in L^2(\widehat{H})$ be such that $\mathcal{T}_{H,a} F=x$. Expanding $F$ with respect to the pair of dual frames $\{\overline{G}_j\,\chi_m\}_{m\in M;\,j=1,2,\ldots,s}$  and $\{rH_j\,\chi_m\}_{m\in M;\,j=1,2,\ldots,s}$, and having in mind \eqref{samples7} we obtain
\[
 F=\sum_{j=1}^s\sum_{m\in M} \big\langle F,\overline{G}_j\,\chi_m\big\rangle_{L^2(\widehat{H})} \, rH_j\,\chi_m=\sum_{j=1}^s\sum_{m\in M} \mathcal{L}_jx(m) \, rH_j\,\chi_m \quad \text{in $L^2(\widehat{H})$}.
\]
The isomorphism $\mathcal{T}_{H,a}$ and the $\Pi$-shifting property \eqref{shifting5}  give, for any $x\in \mathcal{A}_a$, the sampling formula
\[
\begin{split}
 x&=\sum_{j=1}^s\sum_{m\in M}\mathcal{L}_jx(m)\mathcal{T}_{H,a}\big(r H_j\chi_m\big)
   =\sum_{j=1}^s\sum_{m\in M}\mathcal{L}_jx(m)\,\Pi(m)\mathcal{T}_{H,a}\big(r H_j\big)\\
   &=\sum_{j=1}^s\sum_{m\in M}\mathcal{L}_jx(m)\,\Pi(m)c_{j,h}\,,
\end{split}
\]
where $c_{j, h}:=\mathcal{T}_{H,a}(rH_j)\in \mathcal{A}_a$, $j=1,2,\dots, s$. Besides, 
$\big\{\Pi(m) c_{j,h}  \big\}_{m\in M;\,j=1,2,\ldots,s}$ is a frame for $\mathcal{A}_a$.  In fact, the following result  holds:

\begin{theorem}
\label{regular3}
For $x\in \Ac_a$ consider the sequence of samples $\{\mathcal{L}_jx(rm)\}_{m\in \mathbb{Z};\,j=1,2,\dots, s}$ defined in \eqref{samples6}. Assume that the functions $G_j$, $j=1,2,\dots,s$, in \eqref{samples7} belong to $L^\infty(\widehat{H})$, and consider the associated $\mathbb{G}(\xi)$ matrix given in \eqref{Gmatrix}. The following statements are equivalent:
\begin{enumerate}[(a)]
\item The constant $\alpha_{\mathbb{G}}>0$.
\item There exist functions  $H_j(\xi)$ in $L^\infty(\widehat{H})$, $j=1, 2, \dots, s$,  satisfying
\[
\big(H_1(\xi), H_2(\xi), \dots, H_s(\xi)\big)\, \mathbb{G}(\xi)=(1, 0, \dots, 0) \quad \text{a.e.  $\xi$ in $\widehat{H}$}\,.
\]
\item There exist $c_j\in \mathcal{A}_a$, $j=1,2, \dots, s$, such that the sequence
$\big\{\Pi(m)c_j\big\}_{m\in \mathbb{Z};\,j=1,2,\dots s}$ is a frame for $\mathcal{A}_a$, and for any $x\in \mathcal{A}_a$ the expansion
\begin{equation}
\label{sampling3}
x=\sum_{j=1}^s \sum_{m \in M}\mathcal{L}_j x(m)\,\Pi(m)c_j \quad \text{in $\mathcal{H}$}\,,
\end{equation}
holds. 
\item There exists a frame $\big\{C_{j,m}\big\}_{m\in M;\,j=1,2,\dots s}$ for $\mathcal{A}_a$ such that, for each $x\in \mathcal{A}_a$ the expansion
\[
x=\sum_{j=1}^s \sum_{m\in M}\mathcal{L}_{j} x(m)\,C_{j,m} \quad \text{in $\mathcal{H}$}\,,
\]
holds.
\end{enumerate}
In case the equivalent conditions are satisfied, for the elements $c_j$ in $(c)$ we have $c_{j}=\mathcal{T}_{H,a}(rH_j)$, for some functions $H_j$ in $L^\infty(\widehat{H})$, $j=1,2,\dots, s$, and satisfying the condition in $(b)$.
\end{theorem}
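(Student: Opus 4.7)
The plan is to establish the cyclic chain $(a)\Rightarrow(b)\Rightarrow(c)\Rightarrow(d)\Rightarrow(a)$. The first three implications are essentially worked out in the discussion immediately preceding the theorem, so the work there is mostly organisational; the genuinely new step, and the main obstacle, is $(d)\Rightarrow(a)$.

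For $(a)\Rightarrow(b)$ I would take $(H_1,\ldots,H_s)$ to be the first row of the Moore--Penrose pseudo-inverse $\mathbb{G}^\dag(\xi)=[\mathbb{G}^*(\xi)\mathbb{G}(\xi)]^{-1}\mathbb{G}^*(\xi)$. The hypothesis $G_j\in L^\infty(\widehat{H})$ together with the equivalence between spectral and Frobenius norms yields $\beta_{\mathbb{G}}<\infty$, so combined with $\alpha_{\mathbb{G}}>0$ the matrix $\mathbb{G}^*\mathbb{G}$ is uniformly invertible essentially everywhere and the entries of $\mathbb{G}^\dag$ land in $L^\infty(\widehat{H})$. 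For $(b)\Rightarrow(c)$ I would invoke the construction set out just above the theorem: expand $F:=\mathcal{T}_{H,a}^{-1}x$ in the dual frame $\{rH_j\chi_m\}$ of $\{\overline{G}_j\chi_m\}$, identify the coefficients as samples via \eqref{samples7}, and apply $\mathcal{T}_{H,a}$ together with the $\Pi$-shifting property \eqref{shifting5} to transport the expansion to $\mathcal{A}_a$; the frame property of $\{\Pi(m)c_{j,h}\}$ is inherited from that of the dual frame in $L^2(\widehat{H})$ through the isomorphism. The implication $(c)\Rightarrow(d)$ is immediate by setting $C_{j,m}:=\Pi(m)c_j$.

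The main obstacle is $(d)\Rightarrow(a)$. My strategy is to produce a lower frame bound for $\{\overline{G}_j\chi_m\}$ in $L^2(\widehat{H})$ directly out of the assumed reconstruction formula and then apply the spectral characterisation of frames recalled before the theorem. The upper bound $\beta_{\mathbb{G}}<\infty$ comes for free from $G_j\in L^\infty(\widehat{H})$, which in particular guarantees that $\{\overline{G}_j\chi_m\}$ is a Bessel sequence in $L^2(\widehat{H})$ and hence that $\bigl(\mathcal{L}_jx(m)\bigr)_{j,m}$ is square summable for every $x\in\mathcal{A}_a$. Let $B$ denote the upper frame bound of $\{C_{j,m}\}$. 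For any $x\in\mathcal{A}_a$, pairing the expansion in $(d)$ with $x$ and applying the Cauchy--Schwarz inequality gives
\[
\|x\|^2=\sum_{j=1}^s\sum_{m\in M}\mathcal{L}_jx(m)\,\overline{\langle x,C_{j,m}\rangle}_{\mathcal{H}}\le\Bigl(\sum_{j=1}^s\sum_{m\in M}|\mathcal{L}_jx(m)|^2\Bigr)^{1/2}\sqrt{B}\,\|x\|,
\]
so $\sum_{j,m}|\mathcal{L}_jx(m)|^2\ge B^{-1}\|x\|^2$. Transporting this estimate to $L^2(\widehat{H})$ through $\mathcal{T}_{H,a}$ and the identity \eqref{samples7} produces the desired lower frame bound for $\{\overline{G}_j\chi_m\}$, and the characterisation stated before the theorem then forces $\alpha_{\mathbb{G}}>0$.

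The closing assertion, that the $c_j$ in $(c)$ can be written as $\mathcal{T}_{H,a}(rH_j)$ for some admissible $H_j$, is not a separate claim: it is simply read off the $(b)\Rightarrow(c)$ construction once $(a)$, and hence $(b)$, has been secured. The real subtlety lives entirely in $(d)\Rightarrow(a)$, where one must recognise that the existence of \emph{some} frame $\{C_{j,m}\}$ delivering a reconstruction formula is strong enough to pin down a lower frame bound for the specific, sample-dependent sequence $\{\overline{G}_j\chi_m\}$ in $L^2(\widehat{H})$; everything else is a routine transcription of results already developed in the section.
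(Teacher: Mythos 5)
Your proposal is correct and follows essentially the same route as the paper: the implications $(a)\Rightarrow(b)\Rightarrow(c)\Rightarrow(d)$ are read off the construction preceding the theorem, and the closing step $(d)\Rightarrow(a)$ — which the paper delegates to Theorem 1 of the cited reference — is exactly the standard argument you give, namely pairing the reconstruction formula with $x$, applying Cauchy--Schwarz against the Bessel bound of $\{C_{j,m}\}$ to obtain a lower frame inequality for the samples, and transporting it through $\mathcal{T}_{H,a}$ and \eqref{samples7} to conclude $\alpha_{\mathbb{G}}>0$ via the spectral characterisation. No gaps.
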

\begin{proof}
We have already proved that $(a)$ implies $(b)$, and that $(b)$ implies $(c)$. Obviously, $(c)$ implies $(d)$.
The proof concludes as that of Theorem 1 in \cite{garcia:17}.
\end{proof}

\subsubsection*{Notes and remarks}
Some specific comments for the case treated in this section are the following:
\begin{enumerate}
\item The LCA group approach is not just a unified way of dealing with the four classical  groups $\mathbb{R}, \mathbb{Z}, \mathbb{T}, \mathbb{Z}_N$: signal processing often involves products of these groups which are also LCA groups. For example, multichannel video signal involves the group $\mathbb{Z}^d \times \mathbb{Z}_N$, where $d$ is the number of channels and $N$ the number of pixels of each image. The availability of an abstract sampling theory for unitary invariant spaces becomes a useful tool to handle these problems in a unified way. Moreover, any notational complication is avoided especially in the multidimensional setting.

\item Notice that the case exhibited in this section is more general than those in the former sections. Indeed, let $G$ be an LCA group and let $H:=\{kg\}_{k\in \ZZ}$ the (infinite) cyclic group generated by some fix element $g\in G$. Then, the subspace
\[
\mathcal{A}_a:=\overline{\operatorname{span}}\big\{\Pi(kg)a\,:\, k\in \ZZ\big\}=\overline{\operatorname{span}}\big\{[\Pi(g)]^ka\,:\, k\in \ZZ\big\}\,,
\]
is obtained from $T:=\Pi(g)$.

\item The cyclid case in Section \ref{section4} involving the group $\ZZ_N$ may be a particular case of this section. Although there we have used the canonical basis of $\CC^N$ to define the  isomorphism $\mathcal{T}_{N,a}$ instead of the Fourier basis of the characters used here. So in the finite case we have not used, explicitly, Fourier analysis.

\item In case the operator $T=U$ is unitary, a necessary and sufficient condition for $\{T(h)a\}_{h\in H}$ to be a Riesz sequence in $\mathcal{H}$ can be found in \cite{barbieri:15}.

\item In the {\em overcomplete} setting we have that $s>r$. In case $r=s$, the frame condition in Theorem \ref{regular3} becomes a Riesz basis condition: There exist $r$ unique elements $c_j\in \Ac_a$, $j=1, 2, \dots, r$, such that the sequence $\big\{\Pi(m)c_j\big\}_{m\in M;\,j=1,2,\dots r}$ is a {\em Riesz basis} for 
$\mathcal{A}_a$, and the sampling expansion \eqref{sampling3} holds. Moreover, due to the uniqueness of the coefficients in a Riesz basis expansion, the {\em interpolation property} $\mathcal{L}_{j'} c_j(m)=\delta_{j,j'}\, \delta_{m,0}$, where $m\in M$ ($0$ denotes the null element in $G$) and $j, j'=1,2,\dots, r$, holds (for a similar result, see \cite[Corollary 2]{garcia:17}).

\item For the left regular unitary representation $g\in G \mapsto L_g$ of the group $G$ in $L^2(G)$, i.e., $L_gx(g')=x(g'-g)$ for any $g'\in G$ and $x\in L^2(G)$, let $\Ac_a$ be the $H$-shift-invariant subspace of $L^2(G)$ given by $\big\{\sum_{h\in H} \alpha_h\,L_ha(g)\,\,:\,\, \{\alpha_h\}\in \ell^2(H)Ê\big\}$, where $H$ denotes a uniform lattice in $G$. In this context, a Kluv\'anek's sampling theorem in $\Ac_a$ can be deduced from Theorem \ref{regular3}; see \cite[Theorem 3]{garcia:17} for the details.

\end{enumerate}

\medskip

\noindent{\bf Acknowledgments:} 
The authors wish to thank Prof. Christensen for let us know his work \cite{ole:17} on operator representations of frames.
This work has been supported by the grant MTM2017-84098-P from the Spanish {\em Ministerio de Econom\'{\i}a y Competitividad (MINECO)}.
\vspace*{0.3cm}

\end{document}